\newcommand{\W}{\mathcal W}
\newcommand{\R}{\mathbb R}
\newcommand{\E}{\mathbb E}
\newcommand{\N}{\mathbb N}
\newcommand{\Prob}{\mathrm{Prob}}
\newcommand{\Unif}{\mathrm{Unif}}
\newcommand{\Var}{\mathrm{Var}}
\newcommand{\Cov}{\mathrm{Cov}}
\newcommand{\st}{\mathrm{s.t.}}
\newcommand{\supp}{\mathrm{supp}}
\newcommand{\1}{\mathbbm1}
\renewcommand{\epsilon}{\varepsilon}
\setlist[enumerate]{leftmargin=.5in}
\setlist[itemize]{leftmargin=.5in}
\crefname{hypothesis}{Hypothesis}{Hypotheses}
\title{$k$-Variance: A Clustered Notion of Variance\thanks{Originally posted in December 2020.
\funding{J.\ Solomon acknowledges the generous support of Army Research Office grants W911NF1710068 and W911NF2010168, of Air Force Office of Scientific Research award FA9550-19-1-031, of National Science Foundation grant IIS-1838071, from the CSAIL Systems that Learn program, from the MIT–IBM Watson AI Laboratory, from the Toyota–CSAIL Joint Research Center, from a gift from Adobe Systems, and from the Skoltech--MIT Next Generation Program.}}}
\author{
Justin Solomon\thanks{Department of Electrical Engineering and Computer Science, Massachusetts Institute of Technology, Cambridge, MA (\email{jsolomon@mit.edu}, \url{http://people.csail.mit.edu/jsolomon/}).}
\and
Kristjan Greenewald\thanks{MIT--IBM Watson AI Lab, Cambridge, MA (\email{Kristjan.H.Greenewald@ibm.com}, \url{https://kgreenewald.github.io/})}
\and
Haikady N.\ Nagaraja\thanks{Division of Biostatistics, The Ohio State University, Columbus, OH (\email{nagaraja.1@osu.edu}, \url{https://cph.osu.edu/people/hnagaraja})}
}
\begin{document}

\maketitle

\begin{abstract}
  We introduce $k$-variance, a generalization of variance built on the machinery of random bipartite matchings.  $K$-variance measures the expected cost of matching two sets of $k$ samples from a distribution to each other, capturing local rather than global information about a measure as $k$ increases; it is easily approximated stochastically using sampling and linear programming.  In addition to defining $k$-variance and proving its basic properties, we provide in-depth analysis of this quantity in several key cases, including one-dimensional measures, clustered measures, and measures concentrated on low-dimensional subsets of $\R^n$.  We conclude with experiments and open problems motivated by this new way to summarize distributional shape.
\end{abstract}

\begin{keywords}
  Variance, optimal transport, Wasserstein, clustering
\end{keywords}

\begin{AMS}
  49Q25, 62G30, 62H30
\end{AMS}

\section{Introduction}

A key task in statistics and data science is to describe the \emph{shape} of a dataset or distribution in a simple form.  The most basic means of summarizing distributions extract scalar measurements characterizing spread, normality, support, decay, and other aspects of distributional geometry.  Among these measurements, the simplest and most popular choice is \emph{variance}, which measures squared deviation of a random variable from its mean.

A scalar is unlikely to capture all relevant or interesting information about a distribution, and indeed variance is not sensitive to skew, asymmetry, and other structural properties.  A typical way to address this issue is to compute higher-order moments, which---if completely known---can often reconstruct a distribution.  While this solution works mathematically, each (standarized) moment measures the allotment of mass in a distribution relative to its mean, which is hard to interpret in the multi-modal or clustered cases.

In this paper, we introduce a generalization of variance we call $k$-variance, intended to address some of the issues above.  The basic idea of $k$-variance is to draw $2k$ samples from a distribution and to evaluate the transport cost of matching the first $k$ samples to the second $k$ samples. $K$-variance coincides with variance in the $k=1$ case. But, for larger values of $k$, samples get matched to closer counterparts in the distribution rather than between different modes, making $k$-variance a more localized measure of variance.

Our construction of $k$-variance seems to indicate that a tightly-clustered distribution about a few means might have high (1-)variance if those means are far apart, but that $k$-variance of such a distribution will decay rapidly in $k$ relative to that of a unimodal Gaussian.  Indeed, we will prove that this is the case---but only for measures embedded in dimensions $\gtrapprox 5$.  In lower dimensions, $k$-variance exhibits surprising---and somewhat counterintuitive---behavior, which we can capture in detail for one-dimensional $k$-variance using the theory of order statistics.

$K$-variance can be approximated using a simple randomized algorithm, wherein we draw $2k$ points and solve a $k\times k$ transportation problem; unsurprisingly, the accuracy of this easy-to-implement estimator can be improved by averaging over multiple trials.  We provide variance bounds demonstrating that $k$-variance requires fewer such trials as $k$ and/or the ambient dimension increases.

We conclude with some experiments demonstrating the behavior of $k$-variance as a measure of intra-mode variability, as well as a number of open problems motivated by our work.

\paragraph*{Contributions} We introduce a generalization of variance for probability measures on $\R^n$ we call ``$k$-variance,'' built on constructions from optimal transport.  Beyond introducing $k$-variance and its basic properties (\cref{sec:kvar}), we
\begin{itemize}[leftmargin=*]
    \item give alternative expressions and bounds for $k$-variance of probability measures over $\R$ (\cref{sec:1d});
    \item use results in empirical optimal transport to characterize $k$-variance of probability measures concentrated on low-dimensional sets (\cref{sec:lowdimensional}), higher-dimensional sets (\cref{sec:highdimensional}), and with cluster structure (\cref{sec:clustered});
    \item bound the variance of empirical estimators for $k$-variance in terms of sample size and dimension (\cref{sec:variance}); and
    \item provide numerical experiments to demonstrate behavior of $k$-variance and confirm our predicted theory (\cref{sec:experiments}).
\end{itemize}

\section{Related work} 

For the most part, we incorporate discussion of related work into the text below as it arises; our work principally uses results from the theory of optimal transport (cf.\ \cite{villani2003topics,santambrogio2015optimal,peyre2019computational}) and---in one dimension---from the theory of order statistics (cf.\ \cite{david2003order}).

Before commencing our technical discussion, however, we note that our work is built on recent advances in the theory of \emph{random Euclidean bipartite matchings}.  This theory seeks to characterize the cost of matching two independently-drawn $k$-samples of a measure to one another, where the cost of matching two points is proportional to the $p$-th power of Euclidean distance.  See \cite{dobric1995asymptotics,barthe2013combinatorial,de2002almost,dereich2013constructive,goldman2020convergence} and references therein for relevant mathematical theory, and see \cite{yukich2006probability,caracciolo2014scaling} for applications in other disciplines. While these works focus on bounding the transport cost in specific cases or connecting it to physical applications, here we show how the matching cost can be understood as a generalization of variance useful for characterizing the shape of a probability measure.

\section{Preliminaries} We begin with mathematical preliminaries to establish notation.

\subsection{Variance} Our work focuses on generalizing the \emph{variance} of a random variable $X$ drawn from a probability measure $\mu\in\Prob(\R^d)$, which is the expected squared deviation of that variable from its mean $\overline X:=\E[X]$:
\begin{equation}
    \Var(X):=\E_{X\sim\mu}[\|X-\overline X\|_2^2].
\end{equation}
A simple argument reveals an alternative formula for variance:
\begin{equation}\label{eq:variance_pairwise}
    \Var(X)=\frac12 \E_{X,Y\sim\mu}[\|X-Y\|_2^2].
\end{equation}

\subsection{Optimal transport}

Take $\mu,\nu\in\Prob(\R^d)$ to be two Radon probability measures.  Then, we can define the (squared) \emph{2-Wasserstein distance} between $\mu$ and $\nu$ via
\begin{equation}\label{eq:w2}
    \W_2^2(\mu,\nu):=\inf_{\pi\in\Pi(\mu,\nu)} \E_{(X,Y)\sim\pi}[\|X-Y\|_2^2],
\end{equation}
where $\Pi(\mu,\nu)\subseteq\Prob(\R^d\times\R^d)$ denotes the set of \emph{measure couplings} whose marginals are $\mu$ and $\nu$, resp. The Wasserstein distance is a basic object of study in analysis, statistics, machine learning, and related disciplines.  Intuitively, $\W_2(\mu,\nu)$ measures the amount of work it takes to displace $\mu$ onto $\nu$ as distributions of mass over $\R^d$, where the cost of moving a particle of mass from $x\in\R^d$ to $y\in\R^d$ is $\|x-y\|_2^2$; see \cite{peyre2019computational} for a comprehensive introduction, applications, and related discussion.

Of particular importance to our development is the Wasserstein distance between empirical measures of the same size, which can be written as $\mu_k=\frac{1}{k}\sum_{i=1}^k\delta_{x_i}$ and $\nu_k=\frac{1}{k}\sum_{j=1}^k\delta_{y_j}$ for some $\{x_i\}_{i=1}^k,\{y_j\}_{j=1}^k\subset\R^d$.  In this case, the transport problem \eqref{eq:w2} becomes a linear assignment problem with cost $C_{ij}:=\|x_i-y_j\|_2^2$:
\begin{equation}\label{eq:w2finite}
    \W_2^2(\mu_k,\nu_k)=\left\{
    \begin{array}{rl}
    \min_{T\in\R^{k\times k}} & \langle T,C\rangle\\
    \st & T\1=\nicefrac{\1}{k}\\
    & T^\top\1=\nicefrac{\1}{k}\\
    & T\geq0,
    \end{array}
    \right.
\end{equation}
where $\1$ denotes the vector of all ones.  The constraints of \eqref{eq:w2finite} form a scaled version of the Birkhoff polytope (set of doubly-stochastic matrices), whose vertices define bijections between the $x_i$'s and the $y_j$'s.

There is a probabilistic link between \eqref{eq:w2} and \eqref{eq:w2finite}.  For general $\mu,\nu\in\Prob(\R^d)$, we can define an \emph{empirical (plug-in) estimator} of $\W_2^2(\mu,\nu)$ by drawing $x_1,\ldots,x_k\sim\mu$ and $y_1,\ldots,y_k\sim\nu$ and approximating $\W_2^2(\mu,\nu)\approx \W_2^2(\mu_k,\nu_k)$ as in \eqref{eq:w2finite}.  As derived in \cite[Theorem 2]{chizat2020faster}, under straightforward assumptions this approximation converges with rate $k^{-\nicefrac2d}$ for large $k$ when $d>4$.

\section{$k$-variance}\label{sec:kvar}

We can introduce optimal transport into the variance formula \eqref{eq:variance_pairwise} using the $k=1$ case of \eqref{eq:w2finite}, by writing  $\|x-y\|_2^2=\W_2^2(\delta_x,\delta_y).$  That is, an equivalent formula to \eqref{eq:variance_pairwise} is the following: $\Var(X)=\E_{X,Y\sim\mu}[\W_2^2(\delta_X,\delta_Y)].$  This observation immediately suggests a generalization of variance using optimal transport:
\begin{definition}[$k$-variance]\label{def:kvar}
Given a probability measure $\mu\in\Prob(\R^d)$ and a parameter $k\in\N$, define \emph{$k$-variance} as
\begin{equation}\label{eq:vark}
    \Var_k(\mu):=\frac12\cdot \rho(k,d)\cdot\E_{\substack{X_1,\ldots,X_k\sim\mu \\ Y_1,\ldots,Y_k\sim\mu}}\left[\W_2^2\left(\frac{1}{k}\sum_{i=1}^k \delta_{X_i},\frac{1}{k}\sum_{i=1}^k \delta_{Y_i}\right)\right],
\end{equation}
where $\rho(k,d)$ is the \emph{ambient scaling rate} chosen to account for the rate at which the expectation approaches zero:
\begin{equation}
    \rho(k,d) := \left\{
    \begin{array}{ll}
    k & \textrm{ if }d=1\\
    \nicefrac{k}{\log k} & \textrm{ if }d=2\\
    k^{\nicefrac2d} & \textrm{ if }d>2.
    \end{array}
    \right.
\end{equation}
We define
\begin{equation}\label{eq:var_inf}
    \Var_\infty(\mu):=\lim_{k\to\infty} \Var_k(\mu),
\end{equation}
when such a limit exists. For $X\sim\mu$, we will identify $\Var_k(X):=\Var_k(\mu)$.
\end{definition}
See \cref{sec:highdimensional} for formulas motivating our choice of $\rho(k,d)$.

Several simple properties of $\Var_k(\cdot)$ in analogy to variance follow from definitions and simple properties of $\W_2$:
\begin{proposition}[Basic properties of $\Var_k(\cdot)$]
We have the following properties for $\Var_k$:
\begin{enumerate}[label=(\alph*)]
    \item $\Var_1(\mu)=\Var(\mu)$, for $\mu\in\Prob(\R^d)$\label{var_generalized}
    \item $\Var_k(\delta_a)=0$, for $a\in\R^d$\label{var_delta}
    \item $\Var_k(X+a)=\Var_k(X)$, for $X\sim\mu\in\Prob(\R^d),a\in\R^d$\label{var_shift}
    \item $\Var_k(c\cdot X)=c^2\cdot\Var_k(X)$, for $X\sim\mu\in\Prob(\R^d),c\in\R$\label{var_scale}
    \item $\Var_k(X+\tilde X)\geq\Var_k(X)+\Var_k(\tilde X)$, for independent $X\sim\mu\in\Prob(\R^d), \tilde X\sim\nu\in\Prob(\R^d)$\label{var_sum}
\end{enumerate}
\end{proposition}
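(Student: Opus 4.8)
The plan is to verify each of the five properties (a)--(e) in turn, leaning on elementary manipulations of the Wasserstein distance and the structure of the definition \eqref{eq:vark}. For \ref{var_generalized}, note that $\rho(1,d)=1$ for every $d$, so $\Var_1(\mu)=\tfrac12\E_{X,Y\sim\mu}[\W_2^2(\delta_X,\delta_Y)]=\tfrac12\E_{X,Y\sim\mu}[\|X-Y\|_2^2]$, which equals $\Var(\mu)$ by \eqref{eq:variance_pairwise}. For \ref{var_delta}, if $\mu=\delta_a$ then all samples $X_i$ and $Y_i$ equal $a$ almost surely, so the two empirical measures coincide, the $\W_2^2$ term is identically zero, and hence $\Var_k(\delta_a)=0$ regardless of the (finite) value of $\rho(k,d)$.

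For the translation invariance \ref{var_shift}, I would use the fact that if $X\sim\mu$ then the empirical measure of a $k$-sample of $X+a$ is the pushforward under $z\mapsto z+a$ of the corresponding empirical measure of $X$, and $\W_2$ is invariant under a simultaneous translation of both arguments (any coupling of the shifted measures is the pushforward of a coupling of the originals, with identical cost since $\|(x+a)-(y+a)\|_2^2=\|x-y\|_2^2$); taking expectations and multiplying by $\tfrac12\rho(k,d)$ gives the claim. For the scaling property \ref{var_scale}, the same pushforward argument applies with $z\mapsto cz$: couplings are in bijection and the cost scales by exactly $c^2$ since $\|cx-cy\|_2^2=c^2\|x-y\|_2^2$, so $\W_2^2$ of the scaled empirical measures is $c^2$ times the original, and the factor passes through the expectation and the prefactor. (The case $c<0$ is handled identically since $|c|^2=c^2$.)

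The only part requiring genuine thought is the superadditivity \ref{var_sum}. Here I would fix $i.i.d.$ samples $X_1,\dots,X_k,Y_1,\dots,Y_k\sim\mu$ and independent $i.i.d.$ samples $\tilde X_1,\dots,\tilde X_k,\tilde Y_1,\dots,\tilde Y_k\sim\nu$, so that $(X_i+\tilde X_i)$ and $(Y_j+\tilde Y_j)$ are $i.i.d.$ samples of $\mu*\nu$. Conditioning on the $X,Y$ samples only, I would invoke Jensen's inequality (convexity of $t\mapsto t$ is not enough, so more precisely I would use that $\W_2^2$ of the sum satisfies the variance-like decomposition): one writes $\W_2^2$ of the combined empirical measures in terms of an optimal matching and bounds it below by splitting the matching cost across the two independent coordinates. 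Concretely, for any permutation $\sigma$, $\tfrac1k\sum_i \|(X_i+\tilde X_i)-(Y_{\sigma(i)}+\tilde Y_{\sigma(i)})\|_2^2 = \tfrac1k\sum_i\|X_i-Y_{\sigma(i)}\|_2^2 + \tfrac1k\sum_i\|\tilde X_i-\tilde Y_{\sigma(i)}\|_2^2 + \tfrac2k\sum_i\langle X_i-Y_{\sigma(i)},\tilde X_i-\tilde Y_{\sigma(i)}\rangle$; minimizing the left side over $\sigma$ is at least the minimum of the first term plus the minimum of the second term plus the minimum of the cross term, and the expectation of the cross term vanishes because $\tilde X,\tilde Y$ are centered relative to...

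Actually the cleanest route is: take the expectation over the independent $\tilde X,\tilde Y$ first with the $X,Y$ and the matching $\sigma$ held fixed as the optimal one for the $X,Y$ problem --- this is a valid (suboptimal) coupling for the combined problem, so $\W_2^2$(combined) $\le$ that cost; but we want a lower bound, so instead I fix $\sigma$ to be optimal for the combined problem and take expectations, using $\E[\langle X_i - Y_{\sigma(i)}, \tilde X_i - \tilde Y_{\sigma(i)}\rangle]$ -- here $\sigma$ depends on $\tilde X,\tilde Y$, so independence must be used carefully. The robust argument: for the combined optimal $\sigma$, the combined cost equals the sum of the three terms above; the first term is $\ge k\cdot\W_2^2(\mu_k,\nu_k)\cdot$(appropriate normalization) only after taking $\min$, which we do not have for fixed $\sigma$. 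I would therefore instead argue via the dual/Kantorovich formulation or cite that for independent random variables $\W_2^2(\mathrm{law}(A+B),\mathrm{law}(A'+B'))\ge \W_2^2(\mathrm{law}(A),\mathrm{law}(A'))+\W_2^2(\mathrm{law}(B),\mathrm{law}(B'))$ when $B,B'$ are independent of $A,A'$ --- a known property of $\W_2^2$ --- applied to the empirical (conditional) laws, then take expectations and multiply through by $\tfrac12\rho(k,d)$. The main obstacle is making this coupling/superadditivity argument rigorous at the level of empirical measures rather than just for the underlying laws, i.e. ensuring the cross terms are controlled once the matching permutation is itself random; I expect to resolve it by conditioning on one batch of samples, using the tower property, and exploiting that the optimal transport cost between empirical measures is itself a Wasserstein distance to which the known superadditivity under independent summands applies.
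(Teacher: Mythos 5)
Your treatment of (a)--(d) is fine and is essentially what the paper does (it dispatches these in one line via properties of the cost matrix in \eqref{eq:w2finite}); the pushforward arguments for translation and scaling are exactly the intended ones.

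The genuine gap is in (e), and you have correctly located it yourself but not closed it. Once the optimal plan $T$ for the combined problem depends on both batches of samples, the expected cross term $\E\left[\min_{T\in\mathcal B_k}\sum_{ij}(X_i-Y_j)\cdot(\tilde X_i-\tilde Y_j)\,T_{ij}\right]$ does \emph{not} vanish: since $\min_T\langle D,T\rangle\leq\langle D,T_0\rangle$ for the fixed feasible plan $T_0=\frac1k I$, whose expected cost is $0$ by independence, this term is $\leq 0$ (and generically strictly negative). So the split-the-minimization route only yields $\Var_k(X+\tilde X)\geq\Var_k(X)+\Var_k(\tilde X)+(\text{a nonpositive quantity})$, which proves nothing. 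Your fallback---that $\W_2^2$ is \emph{super}additive under independent convolution---is not a known property; the true inequality goes the other way. Taking independent optimal couplings $(A,A')$ and $(B,B')$ produces a feasible coupling $(A+B,A'+B')$ of the convolved laws, giving $\W_2^2(\mathrm{law}(A+B),\mathrm{law}(A'+B'))\leq\W_2^2(\mathrm{law}(A),\mathrm{law}(A'))+\W_2^2(\mathrm{law}(B),\mathrm{law}(B'))+2\,\E[A-A']\cdot\E[B-B']$, i.e.\ \emph{sub}additivity when the mean shifts cancel.

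In fact no argument can close this gap, because (e) as stated is false. Take $d=1$, $k=2$, and $X,\tilde X$ independent Rademacher variables ($\pm1$ with probability $\nicefrac12$ each). By \eqref{eq:kvarfromorderstatistics}, $\Var_2$ is the sum of the variances of the two order statistics, so $\Var_2(X)=\Var_2(\tilde X)=\nicefrac34+\nicefrac34=\nicefrac32$, and the right-hand side of (e) is $3$. The sum $S=X+\tilde X$ takes values $-2,0,2$ with probabilities $\nicefrac14,\nicefrac12,\nicefrac14$; the minimum of two draws equals $-2,0,2$ with probabilities $\nicefrac7{16},\nicefrac12,\nicefrac1{16}$, so $\Var(S_{(1)})=2-\nicefrac9{16}=\nicefrac{23}{16}$, and by symmetry $\Var(S_{(2)})$ is the same, giving $\Var_2(S)=\nicefrac{23}{8}<3$. (For what it is worth, the paper's own proof of (e) fails at exactly the point you were worried about: it passes the expectation inside the minimum, i.e.\ uses $\E[\min_T\langle D,T\rangle]\geq\min_T\langle \E D,T\rangle$, which is the reverse of Jensen's inequality for the concave function $c\mapsto\min_T\langle c,T\rangle$. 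So your instinct that the cross term is the crux was right; the resolution is that the inequality cannot be rescued, not that a cleverer coupling argument is needed.)
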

\begin{proof}
Property \ref{var_generalized} is argued above.  Properties \ref{var_delta}, \ref{var_shift}, and \ref{var_scale} follow from simple properties of the cost matrix in \eqref{eq:w2finite} after substituting \eqref{eq:vark}. To prove \ref{var_sum}, we resort to the form \eqref{eq:w2finite}.  In this case, we can write
$$
\Var_k(X+\tilde X):=\frac12\cdot \rho(k,d)\cdot\E_{
\substack{X_1,\ldots,X_k\sim\mu; \tilde X_1,\ldots,\tilde X_k\sim\nu \\
Y_1,\ldots,Y_k\sim\mu; \tilde Y_1,\ldots,\tilde Y_k\sim\mu}}
\left[\W_2^2\left(\frac{1}{k}\sum_{i=1}^k \delta_{X_i+\tilde X_i},\frac{1}{k}\sum_{i=1}^k \delta_{Y_i+\tilde Y_i}\right)\right].
$$
The cost matrix of the linear program \eqref{eq:w2finite} in this expectation has entries
$$C_{ij}=\|X_i+\tilde X_i-Y_i-\tilde Y_i\|_2^2 = \|X_i-Y_i\|_2^2+\|\tilde X_i - \tilde Y_i\|_2^2+2(X_i-Y_i)\cdot(\tilde X_i-\tilde Y_i).$$
Splitting the minimization in \eqref{eq:w2finite} into three minimizations corresponding to the terms in our expression for $C_{ij}$ above shows:
\begin{align*}
\Var_k(X+\tilde X)\geq \Var_k(X) + \Var_k(\tilde X) + 2\rho(k,d)\E\Bigg[
    \min_{T\in\mathcal B_k} \sum_{ij} [(X_i-Y_i)\cdot(\tilde X_i-\tilde Y_i)]T_{ij}
    \Bigg],
    \end{align*}
    where $\mathcal B_k$ indicates the constraint set in \eqref{eq:w2finite}. 
By Jensen's inequality,
    \begin{align*}
\Var_k(X+\tilde X)&\geq \Var_k(X) + \Var_k(\tilde X) + 2\rho(k,d)\Bigg[
    \min_{T\in\mathcal B_k} \sum_{ij} \E[(X_i-Y_i)\cdot(\tilde X_i-\tilde Y_i)]T_{ij}
    \Bigg]\\
    &=\Var_k(X) + \Var_k(\tilde X)\textrm{ by independence, yielding \ref{var_sum}.}
\end{align*}
\end{proof}

In the following sections, we seek to provide intuition for $\Var_k(\cdot)$ in various settings.  We organize our discussion around \emph{dimensionality}, starting with one-dimensional measures, proceeding to measures with low-dimensional structures, and then considering the high-dimensional case.  We conclude our theoretical discussion with another structured class of measures, those containing clusters of high probability.

\section{One-dimensional $k$-variance}\label{sec:1d}

The $k$-variance $\Var_k$ admits a particularly clean formulation for probability measures over the real numbers $\R$.  Here, we derive this alternative interpretation of $\Var_k$, show how it can be used to derive bounds and estimates describing the behavior of one-dimensional $k$-variance, and give a limiting formula as $k\to\infty.$

\subsection{Alternative formula}

In one dimension, the 2-Wasserstein distance $\W_2$ between empirical measures consisting of the same number of points is given by the $L^2$ distance between the vectors of data points \cite{villani2003topics}.  That is,
\begin{equation}\label{eq:w2in1d}
    \W_2\left(\frac{1}{k}\sum_{i=1}^k \delta_{x_i},\frac{1}{k}\sum_{i=1}^k \delta_{y_i}\right)=\sqrt{\sum_{i=1}^k (x_i-y_i)^2},
\end{equation}
when $x_1\leq x_2\leq\cdots\leq x_k$ and $y_1\leq y_2\leq\cdots\leq y_k.$

To incorporate this formula into \eqref{eq:vark}, take $X_{(i)}$ to be the $i$-th \emph{order statistic} of $X_1,\ldots,X_k\sim\mu\in\Prob(\R)$, a random variable obtained by sorting $\{X_i\}_{i=1}^k$ and taking the $i$-th element of the sorted list; similarly define order statics $Y_{(i)}$ for the samples $\{Y_i\}_{i=1}^k$.  Then, for $d=1$ we can write
\begin{equation}\label{eq:kvarfromorderstatistics}
    \Var_k(\mu)
    =\frac12\cdot \E_{\substack{X_1,\ldots,X_k\sim\mu \\ Y_1,\ldots,Y_k\sim\mu}}\left[\sum_{i=1}^k (X_{(i)}-Y_{(i)})^2\right]
    =\sum_{i=1}^k \Var(X_{(i)}),
\end{equation}
by linearity of expectation and by applying \eqref{eq:w2in1d} and \eqref{eq:variance_pairwise}.  Hence, in one dimension, the $k$-variance is exactly the sum of the variances of the order statistics.

\begin{example}[Uniform distribution]\label{ex:uniform}
Suppose $\mu$ is the uniform distribution on the unit interval.  Then, $X_{(i)}\sim\mathrm{Beta}(i,k+1-i).$
Hence, 
\begin{align}
    \E(X_{(i)}) &= \frac{i}{k+1}\label{eq:unifexpectation}\\
    \Var(X_{(i)})&=\frac{i (k+1-i)}{(k+1)^2 (k+2)}=\frac{p_i(1-p_i)}{k+2}\label{eq:unifvar}\\
    \E( (X_{(i)}-p_i)^4 ) &=\frac{3 i (k-i+1)[2(k+1)^2 + i(k-i+1)(k+5)]}{(k+1)^4(k+2)(k+3)(k+4)}\nonumber\\
&=\frac{3p_i(1-p_i)[2+p_i(1-p_i)(k+5)]}{(k+2)(k+3)(k+4)},\label{eq:fourthpowerunif}
\end{align}
where $p_i=\nicefrac{i}{k+1}$; we include some of the expressions above to assist in our proof of \cref{prop:1dtaylor}. Substituting \eqref{eq:unifvar} into our expression for one-dimensional $k$-variance, 
\begin{equation}\label{eq:unifvark}
    \Var_k(\mathrm{Unif}([0,1]))
=\sum_{i=1}^k \Var(X_{(i)})=\frac{1}{(k+1)^2(k+2)}\sum_{i=1}^k i(k+1-i)=\frac{k}{6(k+1)}.
\end{equation}
This sequence is increasing, and taking a limit as $k\to\infty$ shows $\Var_\infty(\mathrm{Unif}([0,1]))=\nicefrac{1}{6}.$
\end{example}

\begin{example}[Exponential distribution]\label{ex:exponential} 
Suppose $\mu$ is an exponential distribution with parameter $\lambda$.  Then, we can sample from the order statistics of $\mu$ by drawing iid exponential variables $Z_j$ with rate 1 and computing the following \cite{renyi1953theory}:
$$X_{(i)}=\frac{1}{\lambda}\sum_{j=1}^i \frac{Z_j}{k-j+1}.$$
Substituting the variance of an exponential random variable,
$$\Var(X_{(i)})=\sum_{j=1}^i \left(\frac{1}{\lambda(k-j+1)}\right)^2.$$
This gives the following expression for $k$-variance:
$$
\Var_k(\mathrm{Exp}(\lambda))=\frac{1}{\lambda^2}\sum_{i=1}^k \sum_{j=1}^i \frac{1}{(k-j+1)^2} 
=\frac{H_k}{\lambda^2}\approx \log(k) +\gamma,
$$
where $H_k$ is the $k$-th harmonic number and $\gamma$ is the Euler's constant.  Taking $k\to\infty$ shows $\Var_\infty(\mathrm{Exp}(\lambda))=\infty.$
\end{example}

\subsection{Properties of $k$-variance in 1D}

We can immediately derive alternative expressions/bounds for $\Var_k(\cdot)$ in one dimension by applying properties of order statistics:
\begin{proposition}[Bounding $\Var_k(\cdot)$ in 1D]\label{prop:1dbound}
When $d=1$, we can write
\begin{equation}\label{eq:vark1dalternative}
    \Var_k(\mu)=k\sigma^2-\sum_{i=1}^k \left(\overline X_{(i)} - \overline X\right)^2\leq k\sigma^2.
\end{equation}
Moreover, we can bound
\begin{equation}\label{eq:varbound}
    \Var_k(\mu)\geq k\sigma^2-2\sum_{i<j} \sigma_{(i)}\sigma_{(j)} \cdot\frac{i(k+1-j)}{j(k+1-i)},
\end{equation}
with equality for uniform distributions. In these expressions, $X\sim\mu$, $\sigma^2=\Var(X)$, and $\sigma_{(i)}^2=\Var(X_{(i)})$ for $X_1,\ldots,X_k\sim\mu$.
\end{proposition}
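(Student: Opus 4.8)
The plan is to run everything off the one-dimensional identity \eqref{eq:kvarfromorderstatistics}, $\Var_k(\mu)=\sum_{i=1}^k\Var(X_{(i)})$, together with the trivial fact that permuting a finite list of numbers changes neither their sum nor their sum of squared deviations from a fixed point. Write $\overline X_{(i)}:=\E[X_{(i)}]$. For the first identity, apply the bias--variance split $\Var(X_{(i)})=\E[(X_{(i)}-\overline X)^2]-(\overline X_{(i)}-\overline X)^2$ and sum over $i$. Since $\{X_{(i)}\}_{i=1}^k$ is a reordering of $\{X_i\}_{i=1}^k$ for each realization of the sample, $\sum_i(X_{(i)}-\overline X)^2=\sum_i(X_i-\overline X)^2$, so $\sum_i\E[(X_{(i)}-\overline X)^2]=\sum_i\E[(X_i-\overline X)^2]=k\sigma^2$. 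This yields \eqref{eq:vark1dalternative}, $\Var_k(\mu)=k\sigma^2-\sum_{i=1}^k(\overline X_{(i)}-\overline X)^2$, and the bound $\Var_k(\mu)\le k\sigma^2$ is immediate because the subtracted quantity is a sum of squares.

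For the lower bound \eqref{eq:varbound}, the first step is to re-express $k\sigma^2-\Var_k(\mu)$ through covariances of order statistics: since $\sum_{i=1}^k X_{(i)}=\sum_{i=1}^k X_i$ and the $X_i$ are i.i.d., $\Var(\sum_i X_{(i)})=k\sigma^2$, so expanding this variance and subtracting $\sum_i\Var(X_{(i)})=\Var_k(\mu)$ gives $k\sigma^2-\Var_k(\mu)=2\sum_{i<j}\Cov(X_{(i)},X_{(j)})$. It then suffices to bound each covariance, and here I would invoke the classical correlation bound for order statistics (see, e.g., \cite{david2003order}): for $i<j$,
\[
\Cov(X_{(i)},X_{(j)})\ \le\ \sigma_{(i)}\,\sigma_{(j)}\,\sqrt{\frac{i(k+1-j)}{j(k+1-i)}},
\]
i.e.\ the product--moment correlation of two order statistics is largest for the uniform law. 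Substituting into $\Var_k(\mu)=k\sigma^2-2\sum_{i<j}\Cov(X_{(i)},X_{(j)})$ gives the claimed inequality. For the equality case one takes $\mu=\Unif([0,1])$ and checks from the Beta moments recorded in \cref{ex:uniform} that $\Cov(U_{(i)},U_{(j)})=\sigma_{(i)}\sigma_{(j)}\sqrt{\frac{i(k+1-j)}{j(k+1-i)}}$ exactly, so every term of the bound is met with equality.

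The only non-routine input is the per-pair covariance bound; everything else is bookkeeping around $\sum_i X_{(i)}=\sum_i X_i$. If one prefers not to quote it, the task is to show that $\mathrm{corr}(X_{(i)},X_{(j)})$, for fixed $i<j$ and $k$, is maximized over $\mu\in\Prob(\R)$ by the uniform distribution. A natural route uses the representation $X_{(i)}=Q(U_{(i)})$ with $Q=F^{-1}$ nondecreasing, together with the conditional structure of uniform order statistics --- e.g.\ $U_{(i)}\stackrel{d}{=}U_{(j)}\cdot B$ with $B\sim\mathrm{Beta}(i,j-i)$ independent of $U_{(j)}$, so that $\E[U_{(i)}\mid U_{(j)}]=\tfrac ij\,U_{(j)}$ --- followed by a Cauchy--Schwarz / monotone-rearrangement estimate showing that composing with the nondecreasing map $Q$ cannot increase the correlation, with equality forcing $Q$ affine, i.e.\ $\mu$ uniform. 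This extremality statement is the main obstacle; the rest of the argument is elementary.
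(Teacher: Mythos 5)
Your argument follows the paper's proof essentially step for step: the bias--variance split together with the pathwise identity $\sum_i(X_{(i)}-\overline X)^2=\sum_i(X_i-\overline X)^2$ gives \eqref{eq:vark1dalternative}, and then $\sum_i X_{(i)}=\sum_i X_i$, the expansion $k\sigma^2=\Var_k(\mu)+2\sum_{i<j}\Cov(X_{(i)},X_{(j)})$, and the order-statistic correlation bound from \cite{david2003order} give \eqref{eq:varbound}. The one substantive discrepancy is the square root: you bound $\mathrm{Corr}(X_{(i)},X_{(j)})$ by $\left[\frac{i(k+1-j)}{j(k+1-i)}\right]^{1/2}$, whereas the paper (in both the statement and its proof) uses $\frac{i(k+1-j)}{j(k+1-i)}$ without the root. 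Your version is the correct classical statement: for the uniform parent, $\Cov(U_{(i)},U_{(j)})=\frac{i(k+1-j)}{(k+1)^2(k+2)}$ and $\Var(U_{(i)})=\frac{i(k+1-i)}{(k+1)^2(k+2)}$, so the correlation equals the square-rooted ratio exactly, and only the square-rooted form is consistent with the claimed equality for uniform distributions. Indeed, the printed inequality \eqref{eq:varbound} fails for $\mu=\Unif([0,1])$ with $k=2$: it would assert $\frac19=\Var_2(\mu)\geq \frac16-2\cdot\frac1{18}\cdot\frac14=\frac5{36}$, which is false. So what you have proved is the corrected bound, with exponent $1/2$ on the ratio; it is a weaker lower bound than the one printed, but the printed one is an error (a dropped square root), and your derivation is otherwise identical to the paper's. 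Your closing sketch of a from-scratch proof that the uniform maximizes $\mathrm{Corr}(X_{(i)},X_{(j)})$ (via $\E[U_{(i)}\mid U_{(j)}]=\frac ij U_{(j)}$ and a rearrangement estimate) is plausible in outline but not complete as written; since the paper itself simply cites \cite[p.~74]{david2003order} for this lemma, quoting the classical result, as your main argument does, is sufficient.
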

\begin{proof}
We can obtain \eqref{eq:vark1dalternative} by rearranging a sum:
\begin{align}
k\sigma^2=\sum_{i=1}^k\E[(X_i-\overline X)^2]=\sum_{i=1}^k\E[(X_{(i)}-\overline X)^2]&=\sum_{i=1}^k\E[(X_{(i)}-\overline X_{(i)}+\overline X_{(i)}-\overline X)^2]\nonumber\\
&=\Var_k(\mu) + \sum_{i=1}^k (\overline X_{(i)}-\overline X)^2.\label{eq:varianceboundfinalstep}
\end{align}
Removing the final term provides inequality \eqref{eq:vark1dalternative}.

To derive \eqref{eq:varbound}, we rely on a bound on the correlation of order statistics stated in \cite[p.\ 74]{david2003order} and references therein.  In particular, for $i<j$ they show:
\begin{equation}\label{eq:corrbound}
\mathrm{Corr}(X_{(i)},X_{(j)})\leq\frac{i(k+1-j)}{j(k+1-i)},
\end{equation}
where $\mathrm{Corr}(\cdot,\cdot)$ denotes the correlation of random variables, with equality when the parent distribution is uniform. 
We know $\sum_i X_{(i)}=\sum_i X_i$ given the $X_i$'s are iid variables with variance $\sigma^2$; computing the variance of both sides shows
$$k\sigma^2=\Var_k(\mu)+2\sum_{i<j}\Cov(X_{(i)},X_{(j)}).$$
Substituting \eqref{eq:corrbound}, by definition of correlation we have
$$\Var_k(\mu)=k\sigma^2-2\sum_{i<j}\Cov(X_{(i)},X_{(j)})\geq k\sigma^2 - 2\sum_{i<j} \sigma_{(i)}\sigma_{(j)}\cdot \frac{i(k+1-j)}{j(k+1-i)},$$
as needed.
\end{proof}

\begin{remark}[Approximating $\Var_k$]
The expression \eqref{eq:vark1dalternative} suggests the following means of approximating $\Var_k(\mu)$ for large $k$:
\begin{equation}
    \Var_k(\mu)\approx k\sigma^2 - \sum_{i=1}^k (F^{-1}(p_i)-\overline X)^2,
\end{equation}
where $p_i=\nicefrac{i}{k+1}$ and $F^{-1}$ is the quantile function associated to $\mu$. Intuitively, this expression indicates that our index of total local variability is approximately a global variability index minus an index of between-local-group variability.
\end{remark}

Another standard approach to working with order statistics involves Taylor series expansions about quantiles of the sampled probability measure.  Following this strategy yields a useful approximation to $\Var_k(\cdot)$ as well as a limiting formula under certain assumptions about the distribution function:
\begin{proposition}\label{prop:1dtaylor}
Using the notation of \cref{prop:1dbound}, suppose that $\sigma^2$ is finite and that $\mu$ has a differentiable distribution function $f(x)$ with CDF $F(x)$.  Moreover, suppose (i) $f(x)>0$ and (ii) $\nicefrac{f'(x)}{[f(x)]^3}$ is bounded on $F^{-1}((0,1))$. Then, 
as $k\to\infty$ we have
\begin{equation}\label{eq:varapprox}
    \Var_k(\mu)  \approx \frac{1}{k+2} \sum _{i=1}^k  \frac{p_i (1-p_i)}{[f(F^{-1}(p_i))]^2},
\end{equation}
where $p_i=\nicefrac{i}{k+1}$. 
As $k\to\infty$, under the assumptions above we have
\begin{equation}\label{eq:integrallimit}
    \Var_k(\mu) \to \int_{0}^1\frac{u(1-u)}{[f(F^{-1}(u))]^2}\,du = \int_{F^{-1}((0,1))}\frac{F(x)(1-F(x))}{f(x)}\,dx.
\end{equation}
The rate of convergence of $\Var_k(\mu)$ to the limiting integral $\Var_\infty(\mu)$ is of $O(\nicefrac{1}{\sqrt{k}})$.
\end{proposition}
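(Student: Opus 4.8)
The plan is to start from the identity $\Var_k(\mu)=\sum_{i=1}^k\Var(X_{(i)})$ in \eqref{eq:kvarfromorderstatistics} and to estimate each summand by Taylor-expanding the quantile function $Q:=F^{-1}$ about the point $p_i=\nicefrac{i}{k+1}$. By the probability integral transform, $X_{(i)}$ has the distribution of $Q(U_{(i)})$, where $U_{(i)}\sim\mathrm{Beta}(i,k+1-i)$ is the $i$-th order statistic of $k$ i.i.d.\ uniforms; recall $\E[U_{(i)}]=p_i$, that $\Var(U_{(i)})=\nicefrac{p_i(1-p_i)}{(k+2)}$ as computed in \eqref{eq:unifvar}, and that $\E[(U_{(i)}-p_i)^4]$ is given by \eqref{eq:fourthpowerunif}. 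In particular $\Var(U_{(i)})=O(1/k)$ and $\E[(U_{(i)}-p_i)^4]=O(1/k^2)$, uniformly in $i$: since $p_i(1-p_i)\le\nicefrac{1}{4}$, the numerator of \eqref{eq:fourthpowerunif} is $O(k)$ and its denominator is $O(k^3)$. Assumptions (i)--(ii) give that $Q$ is twice continuously differentiable on $(0,1)$ with $Q'(u)=1/f(F^{-1}(u))$ and $Q''(u)=-f'(F^{-1}(u))/[f(F^{-1}(u))]^3$, and (ii) states exactly that $Q''$ is bounded on $(0,1)$; since $(0,1)$ is bounded and $Q'(\nicefrac{1}{2})$ is finite, $Q'$ is then bounded on $(0,1)$ as well.

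The heart of the argument is a uniform per-term estimate obtained from the second-order Taylor expansion of $Q$ about $p_i$. Write $A_i:=Q'(p_i)\,(U_{(i)}-p_i)$ and $B_i:=X_{(i)}-Q(p_i)-A_i$, so that by the mean value theorem $|B_i|\le\tfrac12\|Q''\|_\infty\,(U_{(i)}-p_i)^2$ pointwise. Then $\Var(X_{(i)})=\Var(A_i)+2\Cov(A_i,B_i)+\Var(B_i)$, where: the first term is exactly the desired contribution, $\Var(A_i)=[Q'(p_i)]^2\Var(U_{(i)})=\frac{p_i(1-p_i)}{(k+2)\,[f(F^{-1}(p_i))]^2}$; the last satisfies $\Var(B_i)\le\E[B_i^2]\le\tfrac14\|Q''\|_\infty^2\,\E[(U_{(i)}-p_i)^4]=O(1/k^2)$ by \eqref{eq:fourthpowerunif}; and Cauchy--Schwarz gives $|\Cov(A_i,B_i)|\le\sqrt{\Var(A_i)\Var(B_i)}=O(1/k^{3/2})$. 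Because $\|Q'\|_\infty$ and $\|Q''\|_\infty$ are finite, all of these bounds are uniform in $i$. Summing over $i=1,\dots,k$ collapses the leading terms into exactly the right-hand side of \eqref{eq:varapprox} and leaves an error of order $k\cdot O(1/k^{3/2})=O(1/\sqrt k)$; it is precisely the accumulation of the $k$ cross-covariances that caps the rate at $\sqrt k$.

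It then remains to replace the sum $S_k:=\frac{1}{k+2}\sum_{i=1}^k g(p_i)$, with $g(u):=\nicefrac{u(1-u)}{[f(F^{-1}(u))]^2}=u(1-u)[Q'(u)]^2$, by the integral $I:=\int_0^1 g(u)\,du$. Since $Q'$ and $Q''$ are bounded on $(0,1)$, differentiating $g$ shows it is Lipschitz on $(0,1)$, and as $g(0^+)=g(1^-)=0$ it extends to a Lipschitz function on $[0,1]$ (so $I<\infty$). Then $\frac{1}{k+1}\sum_{i=1}^k g(p_i)$ is a Riemann sum of mesh $\nicefrac{1}{k+1}$ for $I$ and so lies within $O(1/k)$ of it (the single uncovered subinterval, of length $\nicefrac{1}{k+1}$, contributes $O(1/k)$ because $g$ is bounded), while replacing the prefactor $\nicefrac{1}{k+1}$ by $\nicefrac{1}{k+2}$ changes the sum by at most $\frac{1}{(k+1)(k+2)}\sum_{i=1}^k g(p_i)\le\frac{k\,\|g\|_\infty}{(k+1)(k+2)}=O(1/k)$. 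Hence $|S_k-I|=O(1/k)$, and combining with the preceding paragraph $|\Var_k(\mu)-I|=O(1/\sqrt k)$, which yields simultaneously the limit \eqref{eq:integrallimit} and the stated rate; the second equality in \eqref{eq:integrallimit} is the change of variables $x=F^{-1}(u)$, $du=f(x)\,dx$.

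The step I expect to be the main obstacle is the absence of control on $Q'''$ in assumption (ii): a third-order Taylor expansion with a Lagrange remainder is therefore unavailable, and one must instead keep the second-order term only in the crude pointwise-bounded form $|B_i|\le\tfrac12\|Q''\|_\infty(U_{(i)}-p_i)^2$ and dispose of it through an $L^2$ bound and Cauchy--Schwarz. This is exactly what forces the $O(1/\sqrt k)$ rate rather than the $O(1/k)$ rate one would obtain --- and does obtain, e.g., for $\mathrm{Unif}([0,1])$ by \eqref{eq:unifvark} --- if a higher-order expansion could be justified. A secondary point demanding care is the uniformity in $i\in\{1,\dots,k\}$ of all the $O(\cdot)$ estimates above --- on the central moments of $U_{(i)}$ via \eqref{eq:unifvar}--\eqref{eq:fourthpowerunif} and on $Q',Q''$ via (ii) --- which is what makes it legitimate to sum $k$ of them.
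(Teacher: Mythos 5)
Your proposal follows essentially the same route as the paper: a second-order Taylor expansion of $F^{-1}$ about $p_i$ with a Lagrange remainder controlled by assumption (ii), the exact uniform-order-statistic moments \eqref{eq:unifvar} and \eqref{eq:fourthpowerunif} to bound the quadratic remainder's variance by $O(1/k)$ in total and the cross-covariance by $O(1/\sqrt{k})$ in total via Cauchy--Schwarz, and a Riemann-sum passage to the limiting integral. The argument is correct, and your explicit observation that (ii) forces $Q'$ (hence $g$) to be bounded and Lipschitz, giving an $O(1/k)$ Riemann-sum error, is a slightly more careful justification of the final limit than the paper's.
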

\begin{proof}
Note that $X_{(i)}\stackrel{d}{=}F^{-1}(U_{(i)})$ where $U_{(i)}$ is the $i$-th order statistic from the standard uniform parent. We begin with a Taylor expansion for $F^{-1}(U_{(i)})$ given in \cite{arnold1989approximations}. With $p_i=\nicefrac{i}{k+1}$,
\begin{equation}\label{eq:taylor}
F^{-1}(U_{(i)}) = F^{-1}(p_i) + (U_{(i)}-p_i) (F^{-1}(p_i))' + \frac{1}{2}(U_{(i)}-p_i)^2 (F^{-1}(V_i))'',
\end{equation}
for some random variable $V_i\in (p_i, U_{(i)})$.  Differentiating inverse functions shows
\begin{equation}
(F^{-1}(u))' = \frac{1}{f(F^{-1}(u))}\qquad\textrm{ and }\qquad
(F^{-1}(u))'' =  -\frac{f'(F^{-1}(u))}{[f(F^{-1}(u))]^3}.
\end{equation}
Substituting into \eqref{eq:taylor} and taking variance of both sides shows
\begin{equation}
\begin{array}{rl}
\sigma_{(i)}^2 = &\displaystyle \Var(U_{(i)}-p_i)[f(F^{-1}(p_i))]^{-2} +\frac14\Var((U_{(i)}-p_i)^2\cdot (F^{-1}(V_i))'')\\
&\displaystyle +\frac12f(F^{-1}(p_i))^{-1}\Cov(U_{(i)}-p_i,(U_{(i)}-p_i)^2 (F^{-1}(V_i))''),
\end{array}\label{eq:sigmaitaylor}
\end{equation}
where $\Var_k(\mu) = \sum_{i=1}^k \sigma_{(i)}^2$. 

Applying the identity $\Var[Y]\leq\E[Y^2]$, the variance factor in the second term of \eqref{eq:sigmaitaylor} is bounded above by $E((U_{(i)}-p_i)^4 [(F^{-1}(V_i))'']^2)$, which in turn is bounded by $M^2\cdot\E((U_{(i)}-p_i)^4)$ where $M$ is an upper bound for $(F^{-1}(u))''$ for $u \in (0,1)$. From \eqref{eq:fourthpowerunif}, we obtain
\begin{eqnarray*}\label{eq9}
\sum_{i=1}^k\E((U_{(i)}-p_i)^4) &=& \sum_{i=1}^k\frac{3p_i(1-p_i)[2+p_i(1-p_i)(k+5)]}{(k+2)(k+3)(k+4)}\nonumber\\
&=& \frac{6}{(k+3)(k+4)}\sum_{i=1}^k \frac{p_i(1-p_i)}{(k+2)}+ \frac{3(k+5)}{(k+3)(k+4)}\sum_{i=1}^k \frac{p_i^2(1-p_i)^2}{(k+2)}\nonumber\\
&\approx&\frac{6}{k^2}\int_{0}^{1}u(1-u)\,du+ \frac{3}{k}\int_{0}^{1}u^2(1-u)^2\,du = \frac{1}{k^2} + \frac{1}{10k}.
\end{eqnarray*}
Here, the ratios of the first and second terms on the right and left side of the approximation $\approx$ each approach $1$ for large $k$. Thus, we conclude that when summed over $i$, the second term in \eqref{eq:sigmaitaylor} contributes an amount of size $O(\nicefrac1k)$.

The covariance term in \eqref{eq:sigmaitaylor} can be bounded as follows
\begin{eqnarray}
\Cov(\cdots)
 &\leq& [\Var(U_{(i)})\Var((U_{(i)}-p_i)^2F^{-1}(V_i))'')]^{1/2}\nonumber\\
&\leq& [\Var(U_{(i)})]^{1/2}[\E((U_{(i)}-p_i)^4 {M^2}]^{1/2}\nonumber\\
&=&M\cdot\left[\frac{p_i(1-p_i)}{k+2}\cdot\left[\frac{6p_i(1-p_i)}{(k+2)(k+3)(k+4)}+\frac{3(k+5)p_i^2(1-p_i)^2}{(k+2)(k+3)(k+4)}\right]\right]^{1/2}\nonumber\\
&=&M\cdot\frac{p_i(1-p_i)}{k+2}\cdot\left[\frac{6}{(k+3)(k+4)}+\frac{3(k+5)p_i(1-p_i)}{(k+3)(k+4)}\right]^{1/2}\nonumber\\
&<& M\cdot\frac{p_i(1-p_i)}{k+2}\cdot\frac{C}{\sqrt{k+3}}\nonumber
\end{eqnarray} 
for some constant $C$ ($C = 3$ suffices).  Summing over $i$,
\begin{align*}
&\sum_{i=1}^k\frac12f(F^{-1}(p_i))^{-1}\Cov(U_{(i)}-p_i,(U_{(i)}-p_i)^2 (F^{-1}(V_i))'')\\
&\leq \frac{M C}{2\sqrt{k+3}}\frac{1}{k+2}\sum_{i=1}^k \frac{p_i(1-p_i)}{f(F^{-1}(p_i))}
\approx \frac{M C}{2\sqrt{k}}\int_{0}^{1}\frac{u(1-u)}{f(F^{-1}(u))}\,du
= \frac{MC}{2\sqrt{k}}\int_{F^{-1}((0,1))}\hspace{-.3in}F(x)(1-F(x))\,dx,
\end{align*}
where the equality follows upon using the transformation $u=F(x)$.
If the support of $F$ is bounded, the integral above is always finite.  Even when the support is infinite, the integral is finite whenever the variance or the second moment of $F$ is finite, by the comparison test:  Finiteness of the variance implies that as $x \to \infty$, $x^2(1-F(x)) \to 0$, and as $x \to - \infty$, $x^2F(x) \to 0$. Consequently, the covariance sum  is of $O(\nicefrac{1}{\sqrt{k}})$.

Summing the first term in \eqref{eq:sigmaitaylor} over $i$, using \eqref{eq:unifexpectation} we find
 \begin{align*}
 \sum_{i=1}^k [f(F^{-1}(p_i))]^{-2}\Var(U_{(i)}-p_i) 
 &=  \frac{1}{k+2}\sum_{i=1}^k \frac{p_i(1-p_i)}{[f(F^{-1}(p_i))]^2},\textrm{ validating \eqref{eq:varapprox}}\\
 &\approx \int_{0}^{1}\frac{u(1-u)}{[f(F^{-1}(u))]^2}\,du.
 \end{align*}
The transformation $u = F(x)$ shows that
$$\int_{0}^{1}\frac{u(1-u)}{[f(F^{-1}(u))]^2}\,du = \int_{F^{-1}((0,1))}\frac{F(x)(1-F(x))}{f(x)}\,dx,$$
as desired.
\end{proof}

\begin{remark}[Relationship to \cite{bobkov2019one}]
In \cite{bobkov2019one}, Bobkov and Ledoux provide a comprehensive discussion of one-dimensional optimal transport from samples in an attempt to understand convergence of empirical approximations to a measure in the Wasserstein metric.  Their analysis focuses on the ``one-sided'' convergence of an empirical approximation to a true measure, while $k$-variance is based on the Wasserstein distance between two different empirical approximations.

That said, along the way their discussion does make some similar observations to our discussion above.  For instance, their Theorem 4.3 shows the same link to order statistics as our \eqref{eq:kvarfromorderstatistics}.  The ``$J_2$ functional'' defined in their (5.3) is the right-hand side of \eqref{eq:integrallimit}; in our notation, their Theorem 5.1 (and, in particular, their Corollary B.6) implies a bound
\begin{equation}
    \Var_k(\mu)\leq\frac{k}{k+1}J_2(\mu).
\end{equation}
This establishes half of our equality in \eqref{eq:integrallimit}.  Their results show $\lim\sup_{k\to\infty} \Var_k(\mu)\leq J_2(\mu)$, while we are able to show under stronger assumptions that $\lim_{k\to\infty} \Var_k(\mu)=J_2(\mu).$
\end{remark}

\begin{example}[Uniform distribution, continued] 
Continuing \cref{ex:uniform}, we can apply \eqref{eq:integrallimit} to compute
\begin{equation}
    \Var_\infty(\Unif([0,1]))=\int_0^1 \frac{x(1-x)}{1}\,dx=\frac{1}{6}.
\end{equation}
As expected, this expression agrees with \eqref{eq:unifvark} as $k\to\infty.$
\end{example}

\begin{example}[Weibull distribution with shape parameter $\alpha$]
For this distribution, $F(x) = 1- \exp\{-x^{\alpha}\}$ and $f(x) = \alpha x ^{\alpha -1}\exp\{-x^{\alpha}\}$ for $x > 0$, with shape parameter $\alpha > 0$. As $x \to 0^+$,
$$\frac{F(x)(1-F(x))}{f(x)} = \frac{1- \exp\{-x^{\alpha}\}}{\alpha x^{\alpha-1}}\approx\frac{x^{\alpha}}{\alpha x^{\alpha-1}}= \frac{x}{\alpha},$$
and consequently the integral \eqref{eq:integrallimit} is always convergent at the lower limit of integration.  
As $x \to \infty$,
$$\frac{F(x)(1-F(x))}{f(x)} = \frac{1- \exp\{-x^{\alpha}\}}{\alpha x^{\alpha-1}}\approx\frac{1}{\alpha x^{\alpha-1}},$$
and hence the integral \eqref{eq:integrallimit} is convergent at the upper limit if and only if $\alpha >2$.  

Now, for $\alpha >2$, \eqref{eq:integrallimit} implies
\begin{align*}
\Var_\infty(\mathrm{Weib}(\alpha)) &= \int_{0}^{\infty}\frac{1- \exp\{-x^{\alpha}\}}{\alpha x^{\alpha-1}}\,dx
= \frac{1}{\alpha^2}\int_{0}^{\infty}(1-e^{-y})y^{2/\alpha-2}\,dy\\
&= \frac{1}{\alpha^2(2/\alpha-1)}\int_{0}^{\infty}(1-e^{-y})\,d(y^{2/\alpha-1}).
\end{align*}
Upon integration by parts we see that
$$\int_{0}^{\infty}(1-e^{-y})\,d(y^{2/\alpha -1}) = \left[(1-e^{-y})y^{2/\alpha -1}\right]_{0}^{\infty} -\int_{0}^{\infty}e^{-y}y^{2/\alpha -1}\,dy.$$
For $\alpha >2$, the first term yields 0 at both upper and lower limits, and the second term equals $\Gamma(\nicefrac{2}{\alpha})$.  Thus, 
$$\Var_\infty(\mathrm{Weib}(\alpha)) = \left\{
\begin{array}{ll}
\frac{\Gamma(\nicefrac{2}{\alpha})}{\alpha(\alpha-2)} & \textrm{ when }\alpha>2\\
\infty & \textrm{ when }\alpha\leq2.
\end{array}
\right.$$
\end{example}

\begin{example}[Tukey's symmetric $\lambda$ distribution]
This distribution is defined by its quantile function $F^{-1}(u)$, given by
\begin{equation}
F^{-1}(u) = \left\{\begin{array}{ll}
\frac{1}{\lambda}(u^{\lambda} -(1-u)^{\lambda}) & \textrm{ when } \lambda \neq 0\\
\log(\nicefrac{u}{(1-u)}) & \textrm{ when } \lambda = 0
\end{array}\right.
\end{equation}
for $u\in[0,1]$ and $\lambda\in\R$.  When $\lambda=0$, we obtain the standard logistic distribution. 

When $\lambda \neq 0$, the quantile density function $(F^{-1}(u))'$ is given by $u^{\lambda-1} + (1-u)^{\lambda-1}$,  and $$(F^{-1}(u))'' = (\lambda -1)[u^{\lambda-2} - (1-u)^{\lambda -2}] $$ is bounded if and only if $\lambda \geq 2$. Hence, we satisfy the sufficient conditions needed for \cref{prop:1dtaylor}. Thus for $\lambda \geq2$,
\begin{align} 
\Var_\infty(\mathrm{Tukey}(\lambda))
&= \int_{0}^{1} u(1-u)(F^{-1}(u))'\,du
= \int_{0}^{1} u(1-u)[u^{\lambda -1} + (1-u)^{\lambda -1}]\,du\nonumber\\
&= 2\cdot\mathrm{Beta}(\lambda +1, 2)
= \frac{2}{(\lambda+1)(\lambda+2)}.\label{eq:tukey} 
\end{align}
The integral on the right is finite whenever $\lambda>-1$, and the expression holds for $\lambda=0$.  For $\lambda\in(-1,2),$ we can only say that $\lim\sup_{k\to\infty}\Var_k(\mu)$ is bounded above by the right-hand side.
\end{example}

\section{Low-dimensional measures}\label{sec:lowdimensional}

Having worked out the case of one-dimensional measures, we now consider measures that have low-dimensional structure but are embedded in a higher-dimensional space.  
Specifically, define
\begin{definition}[$\epsilon$-fattening and $\epsilon$-covering number, \cite{talagrand1995concentration,weed2019sharp}]
For any compact $X\subset\R^d$ and $S\subseteq X$, the $\epsilon$-fattening of $S$ is $S_\epsilon := \{y: D(y,S) \leq \epsilon\}$, where $D$ denotes the Euclidean distance.  The $\epsilon$-covering number $\mathcal N_\epsilon(S)$ of $S$ is the minimum $m$ such that there exist $m$ points $x_1,\ldots,x_m\in\R^d$ with $S\subseteq\bigcup_i B_\epsilon(X_i).$
\end{definition}
We borrow a recent bound on empirical transport, specialized to $\W_2$:
\begin{proposition}[\cite{weed2019sharp}, Proposition 15]
Suppose $\supp(\mu) \subseteq S_\epsilon$ for some $\epsilon > 0$, where $S$ satisfies $\mathcal{N}_{\epsilon'}(S) \leq (3\epsilon')^{-d'}$ for all $\epsilon' \leq \nicefrac{1}{27}$ and some $d'>4$. Then, for all $k \leq (3\epsilon)^{-d'}$, we have $\E[\W_2^2(\mu, \hat{\mu}_k)]\leq C_1\cdot k^{-2/d'}$, where $C_1 = 27^2 (2 + 1/(3^{d'/2 - 2} - 1))$ and $\mu_k$ denotes the $k$-point empirical measure.
\end{proposition}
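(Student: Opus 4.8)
The plan is to run the multiscale (dyadic-partition) argument that underlies empirical-transport bounds of this type \cite{weed2019sharp,dereich2013constructive}. The core is a deterministic inequality: if $\{\mathcal{Q}_m\}_{m\ge0}$ is a nested family of partitions of a bounded set containing $\supp(\mu)\cup\supp(\hat\mu_k)$ in which every cell of $\mathcal{Q}_m$ has diameter at most $\Delta\cdot 3^{-m}$, then one builds a coupling by transporting mass locally inside the finest cells and recursively correcting the residual imbalances between parent cells, which gives
\begin{equation*}
    \W_2^2(\mu,\hat\mu_k)\ \le\ C\,\Delta^2\sum_{m\ge0} 3^{-2m}\sum_{Q\in\mathcal{Q}_m}\bigl|\mu(Q)-\hat\mu_k(Q)\bigr|
\end{equation*}
for an absolute constant $C$.

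First I would build the hierarchy from the covering-number hypothesis. The bound $\mathcal{N}_{\epsilon'}(S)\le(3\epsilon')^{-d'}$ for $\epsilon'\le\nicefrac1{27}$ lets me cover $S$ --- and, after inflating each covering ball by $\epsilon$, the fattening $S_\epsilon\supseteq\supp(\mu)$ --- by at most $3^{md'}$ sets of diameter $\asymp 3^{-m}$ at each scale $3^{-m}$ with $\epsilon\lesssim 3^{-m}\le\nicefrac1{27}$, and a standard refinement argument turns these into a nested family (at the cost of a constant factor in the diameters). Since $\mathcal{N}_{\epsilon'}(S)$ saturates at $\mathcal{N}_\epsilon(S)\asymp(3\epsilon)^{-d'}$ once $\epsilon'<\epsilon$, the useful range of scales is exactly $3^{-m}\gtrsim\epsilon$; this is precisely why the hypothesis restricts to $k\le(3\epsilon)^{-d'}$, so that the critical scale identified below stays above $\epsilon$ and the low-dimensional structure --- not the fattening --- governs the rate.

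Next I would take expectations. For a cell $Q$ one has $\E|\mu(Q)-\hat\mu_k(Q)|\le\sqrt{\Var \hat\mu_k(Q)}=\sqrt{\mu(Q)(1-\mu(Q))/k}\le\sqrt{\mu(Q)/k}$ and also trivially $\E|\mu(Q)-\hat\mu_k(Q)|\le 2\mu(Q)$; summing the former over a level with $N_m\le3^{md'}$ cells and applying Cauchy--Schwarz gives $\sum_{Q\in\mathcal{Q}_m}\E|\mu(Q)-\hat\mu_k(Q)|\le\sqrt{N_m/k}$, while the latter sums to $2$. Substituting into the deterministic bound yields $\E[\W_2^2(\mu,\hat\mu_k)]\lesssim\sum_{m\ge0}3^{-2m}\min(2,\sqrt{3^{md'}/k})$, which I would split at the critical level $m^\star$ with $3^{m^\star d'}\asymp k$, i.e.\ $3^{-m^\star}\asymp k^{-1/d'}$. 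For $m\le m^\star$ the square-root branch dominates and contributes $k^{-1/2}\sum_{m\le m^\star}3^{m(d'/2-2)}$, a geometric series of ratio $3^{d'/2-2}>1$ --- this is exactly where $d'>4$ enters --- whose partial sum is $\asymp 3^{m^\star(d'/2-2)}/(3^{d'/2-2}-1)\asymp k^{1/2-2/d'}/(3^{d'/2-2}-1)$, producing the rate $k^{-2/d'}$ together with the factor $1/(3^{d'/2-2}-1)$ in $C_1$; for $m>m^\star$ the constant branch contributes $\sum_{m>m^\star}2\cdot3^{-2m}\lesssim 3^{-2m^\star}\asymp k^{-2/d'}$, accounting for the additive $2$. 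The prefactor $27^2$ is the squared diameter normalization inherited from the restriction $\epsilon'\le\nicefrac1{27}$.

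The step I expect to be the main obstacle is the constant bookkeeping: arranging the multiscale partition to be genuinely nested while pinning each level's cell count to the covering-number bound, and then tracking constants through both geometric series so that they collapse to exactly $C_1=27^2\bigl(2+1/(3^{d'/2-2}-1)\bigr)$. The $\epsilon$-fattening is the delicate ingredient, since one must confirm that scales below $\epsilon$ --- where the intrinsic-dimension control degrades --- never contribute, which is the entire purpose of the constraint $k\le(3\epsilon)^{-d'}$; a fully rigorous treatment appears in \cite{weed2019sharp}.
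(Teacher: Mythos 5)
The paper does not prove this proposition; it is quoted verbatim from \cite{weed2019sharp}, so the only in-paper ``proof'' is the citation itself. Your multiscale sketch correctly reconstructs the argument used in that reference---the dyadic-partition transport bound, the per-level estimate $\min(2,\sqrt{N_m/k})$ via Cauchy--Schwarz on the binomial fluctuations, the critical scale $3^{-m^\star}\asymp k^{-1/d'}$ with the geometric series of ratio $3^{d'/2-2}$ forcing $d'>4$, and the role of $k\leq(3\epsilon)^{-d'}$ in keeping that scale above the fattening scale $\epsilon$---though, as you acknowledge, the construction of a genuinely nested partition family with the stated cell counts and the bookkeeping that yields exactly $C_1=27^2(2+1/(3^{d'/2-2}-1))$ are deferred to the source, so this stands as a correct proof plan rather than a self-contained proof.
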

Translating this to our setting using the triangle inequality, we get 
\begin{proposition}[$\Var_k(\cdot)$ for low-dimensional distributions]\label{prop:varklowdim}
Suppose $\supp(\mu) \subseteq S_\epsilon$ for some $\epsilon > 0$, where $S$ satisfies $\mathcal{N}_{\epsilon'}(S) \leq (3\epsilon')^{-d'}$ 
for all $\epsilon' \leq \nicefrac{1}{27}$ and some $d' > 4$. Then, for all $k \leq (3\epsilon)^{-d'}$, we have $\Var_k(\mu)\leq C_1 \cdot k^{2/d-2/d'}.$
\end{proposition}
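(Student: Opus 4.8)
The plan is to reduce the two‑sample quantity defining $\Var_k$ to the one‑sample empirical convergence rate already quoted from \cite{weed2019sharp}, paying only a constant factor, via the triangle inequality for $\W_2$. Write $\hat\mu_k=\frac1k\sum_{i=1}^k\delta_{X_i}$ and $\hat\mu_k'=\frac1k\sum_{i=1}^k\delta_{Y_i}$ for the two independent $k$‑point empirical measures appearing in \eqref{eq:vark}, so that $\Var_k(\mu)=\frac12\,\rho(k,d)\,\E[\W_2^2(\hat\mu_k,\hat\mu_k')]$.

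First I would insert the true measure $\mu$ as an intermediate point. Since $\W_2$ is a metric, $\W_2(\hat\mu_k,\hat\mu_k')\le \W_2(\hat\mu_k,\mu)+\W_2(\mu,\hat\mu_k')$, and squaring gives $\W_2^2(\hat\mu_k,\hat\mu_k')\le \W_2^2(\hat\mu_k,\mu)+\W_2^2(\mu,\hat\mu_k')+2\,\W_2(\hat\mu_k,\mu)\,\W_2(\mu,\hat\mu_k')$. Taking expectations, using that $\hat\mu_k$ and $\hat\mu_k'$ are i.i.d., and bounding the cross term by Cauchy--Schwarz together with Jensen's inequality $\E[\W_2(\mu,\hat\mu_k)]\le(\E[\W_2^2(\mu,\hat\mu_k)])^{1/2}$, one obtains $\E[\W_2^2(\hat\mu_k,\hat\mu_k')]\le 4\,\E[\W_2^2(\mu,\hat\mu_k)]$. (Equivalently, apply $(a+b)^2\le 2a^2+2b^2$ and symmetry; the factor is the same.)

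Next I would invoke Proposition 15 of \cite{weed2019sharp} exactly as quoted above: the hypotheses of the present proposition are precisely those of that result, namely $\supp(\mu)\subseteq S_\epsilon$, the covering bound $\mathcal N_{\epsilon'}(S)\le(3\epsilon')^{-d'}$ for all $\epsilon'\le\nicefrac1{27}$ with $d'>4$, and the range restriction $k\le(3\epsilon)^{-d'}$, so it applies verbatim to yield $\E[\W_2^2(\mu,\hat\mu_k)]\le C_1\,k^{-2/d'}$. Combining with the previous step gives $\E[\W_2^2(\hat\mu_k,\hat\mu_k')]\le 4C_1\,k^{-2/d'}$; substituting the ambient scaling rate $\rho(k,d)=k^{2/d}$ (for $d>2$; for $d\in\{1,2\}$ one replaces $k^{2/d}$ by $k$ or $\nicefrac{k}{\log k}$ and the corresponding exponent) then yields $\Var_k(\mu)\le 2C_1\,k^{2/d-2/d'}$, which is the asserted bound up to the harmless absolute factor $2$ that can be absorbed into the constant.

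There is essentially no hard step here; the only points requiring care are (i) checking that passing from the two‑sample to the one‑sample Wasserstein quantity costs only a constant and not a worse rate in $k$ — which is exactly what the triangle‑inequality/Cauchy--Schwarz computation in the second paragraph confirms — and (ii) verifying that the side condition $k\le(3\epsilon)^{-d'}$ and the covering hypothesis transfer unchanged so that Proposition 15 is legitimately applicable. If any subtlety arises it is purely bookkeeping of the constant $C_1$ together with the factor produced by the triangle inequality, which is why the statement is most naturally phrased up to an absolute constant.
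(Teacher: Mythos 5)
Your proposal is correct and follows essentially the same route as the paper, which likewise obtains the bound by inserting $\mu$ via the triangle inequality and then invoking \cite[Proposition 15]{weed2019sharp}. Your observation that the honest constant is $2C_1$ rather than $C_1$ (from $\frac12\rho(k,d)\cdot 4\,\E[\W_2^2(\mu,\hat\mu_k)]$) is a fair catch---the paper's statement silently absorbs this factor.
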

Unsurprisingly, the proposition above shows that if we measure the $d$-dimensional $k$-variance of an intrinsically $d'$-dimensional measure, at least when $4<d'<d$ we have $\Var_k(\mu)\to0$ as $k\to\infty$.  As a special case, we see that empirical measures have $k$-variance tending to zero for higher-dimensional measures.  Interestingly, this is not the case in low dimensions, as we can see in the following example:
\begin{example}[Two-point empirical measures]
Take $\mu=\nicefrac{(\delta_{-0.5e_1}+\delta_{0.5e_1})}{2}$, constructed from standard basis vector $e_1=(1,0,\ldots,0)\in\R^d$.  In this case, $\W_2$ between two $k$-samples from $\mu$ counts the imbalance in the number of $-0.5$ vs.\ $0.5$ samples between the two draws.  Hence,  $\Var_k(\mu)$ is the expected absolute difference $|A-B|$ of two binomial variables $A,B\sim B(k,\nicefrac12)$, scaled by $\nicefrac{\rho(k,d)}{2k}$.  From \cite[eq.\ (2.9)]{ramasubban1958mean}, for binomially-distributed variables $X_1,X_2\sim B(k,p)$ we have
$$\E(|X_1-X_2|)
=2kp(1-p)\cdot{}_2F_1\left(1-k,\frac12;2;4p(1-p)\right),$$
where ${}_2F_1$ is Gauss' hypergeometric function.  Substituting $p=\nicefrac12$ shows 
$$
\E(|X_1-X_2|)
\!=\!\frac{k\Gamma(2)}{2\Gamma(\nicefrac12)\Gamma(\nicefrac32)}\int_0^1\!\!t^{-\nicefrac12}(1-t)^{k-1}\,dt
\!=\!\frac{k\Gamma(k+\nicefrac12)}{2\Gamma(\nicefrac32)\Gamma(k+1)}
\!=\!{{2k}\choose{k}}\cdot\frac{k}{2^{2k}}.
$$
Hence,
$$
\Var_k(\mu)=\frac{\rho(k,d)}{2^{2k+1}} {{2k}\choose{k}}
\approx \frac{\rho(k,d)}{2\sqrt{\pi k}}
=\frac{1}{2\sqrt{\pi}}\cdot\left\{
\begin{array}{ll}
\sqrt{k} & \textrm{ if }d=1\\
(\log k)^{-1}\cdot\sqrt{k} & \textrm{ if }d=2\\
k^{2/d-1/2} & \textrm{ if }d\geq3.
\end{array}
\right.
$$
by Stirling's approximation.  So, $\Var_k(\mu)$ diverges for $d\leq3$, converges to $\nicefrac{1}{2\sqrt{\pi}}$ for $d=4$, and converges to $0$ for $d\geq5$.
\end{example}

\section{Higher-dimensional measures}\label{sec:highdimensional}

A surprising result of our experiments detailed in \cref{sec:experiments} is that one-dimensional $k$-variance seems to have totally different behavior than $k$-variance for measures on $\R^d$ for large $d$.  While we cannot provide as a complete a story as \cref{sec:1d} for the one-dimensional case, some results in the theory of random Euclidean matching are directly relevant to our construction and can provide some insight into the behavior of $\Var_k(\cdot)$.  

\begin{example}[Unit cube]
Suppose $\mu=\Unif([0,1]^d)$.  Then, for large $k$ we have the following formula \cite[eq.\ (1.1)]{goldman2020convergence}:
\begin{equation}\label{eq:uniformcube}
\E_{\substack{X_1,\ldots,X_k\sim\mu \\ Y_1,\ldots,Y_k\sim\mu}}\left[\W_2^2\left(\frac{1}{k}\sum_{i=1}^k \delta_{X_i},\frac{1}{k}\sum_{i=1}^k \delta_{Y_i}\right)\right]\approx \left\{\begin{array}{rl}
k^{-1} & \textrm{ if }d=1\\
\nicefrac{(\log k)}{k} & \textrm{ if }d=2\\
k^{-\nicefrac2d} & \textrm{ if }d\geq3.
\end{array}\right.
\end{equation}
These formulas motivate our choice of scaling factors $\rho(k,d)$ in \cref{def:kvar}.  \cite[Theorem 2]{chizat2020faster} observes similar rates for $d>4$ for general measures with support in the unit ball, but their upper bound decays more slowly in $k$ than \eqref{eq:uniformcube} for $d\leq4$.
\end{example}

\begin{example}[Unit square]
\cite{benedetto2020euclidean} predicts a similar $\nicefrac{(\log k)}{k}$ rate for measures with positive density on the unit square $[0,1]^2$.  Specifically for $\mu=\Unif([0,1]^2)$, we can obtain the following limit \cite[Theorem 1.1]{ambrosio2019finer}:
$$\lim_{k\to\infty} \frac{k}{\log k}\E_{\substack{X_1,\ldots,X_k\sim\mu \\ Y_1,\ldots,Y_k\sim\mu}}\left[\W_2^2\left(\frac{1}{k}\sum_{i=1}^k \delta_{X_i},\frac{1}{k}\sum_{i=1}^k \delta_{Y_i}\right)\right]=\frac{1}{2\pi}.$$
Hence, we have $\Var_\infty([0,1]^2)=\nicefrac1{2\pi}.$
\end{example}

\section{Clustered measures}\label{sec:clustered}

To give an idea of the value of measuring $k$-variance for $k>1$, in this section we explore the case of \emph{clustered} measures, which distinguishes the behavior of $\Var_k(\cdot)$ from that of $\Var_1(\cdot)$. 
We consider the following definitions, again from \cite{weed2019sharp} similar to our discussion in \cref{sec:lowdimensional}, which provide two ways of identifying clusterable structure in probability measures:
\begin{definition}[$(m,\sigma^2)$-Gaussian mixture]
A distribution $\mu$ is an \emph{$(m,\sigma^2)$-Gaussian mixture} if it is a mixture of $m$ Gaussian distributions in $\mathbb{R}^d$, and the trace of the covariance matrix of each mixture component is bounded above by $\sigma^2$. 
\end{definition}
\begin{definition}[Clusterable measure]
A distribution $\mu$ is \emph{$(m,\Delta)$-clusterable} if $\mathrm{supp}(\mu)$ lies in the union of $m$ balls of radius at most $\Delta$.
\end{definition}

The following proposition from \cite{weed2019sharp} directly suggests a $k$-variance bound:
\begin{proposition}[\cite{weed2019sharp}, Propositions 13 and 14]\label{prop:weedclustered}  If $\mu$ is a $(m,\sigma^2)$-Gaussian mixture and $\log \nicefrac{1}{\sigma}\geq \nicefrac{25}{8}$, then for all $k \leq m (32 \sigma^2 \log \nicefrac{1}{\sigma})^{-2}$,
 \begin{equation}\label{eq:weedbach}
 \E[\W_2^2(\mu, \hat{\mu}_k)] \leq 84 \sqrt{\nicefrac{m}{k}},
\end{equation}
where $\hat \mu_k$ is the empirical measure obtained by drawing $k$ samples. 
  The same rate holds for $(m,\Delta)$-clusterable distributions for all $k \leq m (2\Delta)^{-4}$.
\end{proposition}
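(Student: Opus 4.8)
The plan is to derive both halves of the statement from a single coupling argument, reducing the Gaussian-mixture case to the clusterable case by truncation. The numerology already hints at this: if we truncate each Gaussian component to the ball of radius $r:=2\sqrt2\,\sigma\sqrt{\log\nicefrac{1}{\sigma}}$ about its mean, then $(2r)^2=32\sigma^2\log\nicefrac{1}{\sigma}$, so the hypothesis $k\le m(32\sigma^2\log\nicefrac{1}{\sigma})^{-2}$ is exactly $k\le m(2r)^{-4}$, the same range that appears for $(m,\Delta)$-clusterable measures with $\Delta=r$; and $\log\nicefrac{1}{\sigma}\ge\nicefrac{25}{8}$ is the threshold at which the truncated tail mass becomes negligible. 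So the substance is the clusterable bound, and the Gaussian statement follows by adding back a small tail correction.

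For the clusterable case, suppose $\supp(\mu)\subseteq\bigcup_{j=1}^m B(c_j,\Delta)$; after making the balls disjoint, write $\mu=\sum_j p_j\mu_j$ with $\mu_j$ supported in $B(c_j,\Delta)$, draw $k$ samples, let $k_j$ be the number landing in ball $j$, and set $\hat p_j=k_j/k$, so $\hat\mu_k=\sum_j\hat p_j\hat\mu_j$. I would construct an explicit coupling in two stages: (i) inside each ball, transport $\min(p_j,\hat p_j)$ units of $\mu_j$-mass to $\hat\mu_j$-mass, at cost at most $(2\Delta)^2$ per unit; (ii) transport the leftover $\sum_j(p_j-\hat p_j)_+$ units of true mass to the equal amount $\sum_j(\hat p_j-p_j)_+$ of empirical mass across balls, at cost at most $\mathrm{diam}(\supp\mu)^2$ per unit. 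This gives the deterministic bound $\W_2^2(\mu,\hat\mu_k)\le 4\Delta^2+\mathrm{diam}(\supp\mu)^2\cdot\mathrm{TV}(\hat{\mathbf p},\mathbf p)$. Taking expectations, $\E|\hat p_j-p_j|\le\sqrt{p_j(1-p_j)/k}$ together with Cauchy--Schwarz ($\sum_j\sqrt{p_j}\le\sqrt m$) yields $\E\,\mathrm{TV}(\hat{\mathbf p},\mathbf p)\le\tfrac12\sqrt{m/k}$; and the hypothesis $k\le m(2\Delta)^{-4}$ is precisely what makes $4\Delta^2\le\sqrt{m/k}$. Hence $\E[\W_2^2(\mu,\hat\mu_k)]\le\bigl(1+\tfrac12\,\mathrm{diam}(\supp\mu)^2\bigr)\sqrt{m/k}$, and under the ambient boundedness that accompanies these bounds in \cite{weed2019sharp} (cf.\ the covering-number hypotheses behind \cref{prop:varklowdim}) the geometric factor collapses into the universal constant $84$.

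For the Gaussian reduction, I would truncate each component $\mathcal N(c_j,\Sigma_j)$ to $B(c_j,r)$ and renormalize to get $\tilde\mu$, which is $(m,r)$-clusterable and hence obeys the bound just proved on the stated range of $k$. The error has two sources. First, $\W_2^2(\mu,\tilde\mu)\le\tfrac1m\sum_j\E[\|X-c_j\|^2\1\{\|X-c_j\|>r\}]$, the second moment carried by the removed tails. Second, the samples: with high probability no draw is a tail outlier, and if we choose the coupling so that any outlier $X_i$ is matched to $\mu$-mass near $c_{j(i)}$, the outliers contribute in expectation at most another copy of that same tail second moment. So the whole correction is $O\bigl(\tfrac1m\sum_j\E[\|X-c_j\|^2\1\{\|X-c_j\|>r\}]\bigr)$, and it must be shown to be small relative to $\sqrt{m/k}\ge 32\sigma^2\log\nicefrac{1}{\sigma}$. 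Since $\E\|X-c_j\|^2=\mathrm{tr}\,\Sigma_j\le\sigma^2$, a Gaussian fourth-moment estimate together with a tail bound for $\|X-c_j\|$ closes this, and $\log\nicefrac{1}{\sigma}\ge\nicefrac{25}{8}$ is exactly the margin that makes the bookkeeping finish with the constant $84$.

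The step I expect to be the main obstacle is this last tail estimate, because the hypothesis bounds only $\mathrm{tr}\,\Sigma_j$, not $\|\Sigma_j\|_{\mathrm{op}}$. A plain Markov bound on $\|X-c_j\|^2$ is borderline — it gives tail mass of order $\sigma^2/\sqrt{\log\nicefrac{1}{\sigma}}$, which is small but only barely — so one genuinely needs a Hanson--Wright / Gaussian-chaos concentration inequality for the quadratic form $\|X-c_j\|^2$, and one has to track constants carefully to see that $\nicefrac{25}{8}$ is the correct threshold; extracting the clean constant $84$ at the end is the fiddly part. A secondary point, easy to overlook, is that the cross-cluster transport in stage (ii) is only bounded because one assumes the support (equivalently, the spread of the cluster centers or component means) is confined to a bounded region, as is implicit throughout \cite{weed2019sharp}.
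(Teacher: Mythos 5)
First, a point of reference: the paper offers no proof of this proposition at all --- it is imported from \cite{weed2019sharp} (their Propositions 13 and 14) and used as a black box --- so there is no internal argument to compare yours against. Your reconstruction is nonetheless essentially the right one. The reduction of the Gaussian case to the clusterable case by truncating each component at radius $r=2\sqrt2\,\sigma\sqrt{\log\nicefrac1\sigma}$, so that $(2r)^2=32\sigma^2\log\nicefrac1\sigma$ and the two admissible ranges of $k$ coincide, is exactly how the source derives its Proposition 14 from its Proposition 13. The tail estimate you worry about does close: since $\|\Sigma_j\|_{\mathrm{op}}\le\mathrm{tr}\,\Sigma_j\le\sigma^2$, a chi-square (Laurent--Massart / Hanson--Wright) bound gives $\Prob(\|X-c_j\|^2>8\sigma^2\log\nicefrac1\sigma)\le\sigma^{c}$ for some $c>2$ once $\log\nicefrac1\sigma\ge\nicefrac{25}{8}$, so by Cauchy--Schwarz the truncated second moment is $O(\sigma^{2+c/2})$, negligible against $\sqrt{\nicefrac{m}{k}}\ge32\sigma^2\log\nicefrac1\sigma$. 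Two small slips: the mixture weights need not be uniform, so the tail correction is $\sum_j w_j\E[\cdots]$ rather than $\frac1m\sum_j$; and your one-scale coupling for the clusterable case is more elementary than the dyadic chaining bound Weed and Bach actually invoke (which is where constants like $84$ and the $27^2(2+\cdots)$ of \cref{prop:varklowdim} originate), though it proves the same rate.

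The one genuine gap is the point you dismiss as ``secondary'': it is the entire distance between what you prove and what is stated. Your stage-(ii) estimate gives $\E[\W_2^2(\mu,\hat\mu_k)]\le\bigl(1+\tfrac12\mathrm{diam}(\supp\mu)^2\bigr)\sqrt{\nicefrac{m}{k}}$, and this diameter dependence cannot be removed: for $\mu=\tfrac12(\delta_0+\delta_{Le_1})$ one has $\W_2^2(\mu,\hat\mu_k)=L^2|\hat p-\tfrac12|$, hence $\E[\W_2^2(\mu,\hat\mu_k)]\asymp L^2/\sqrt k$, which exceeds $84\sqrt{\nicefrac{2}{k}}$ once $L\gtrsim17$, while $\mu$ is $(2,\Delta)$-clusterable for every $\Delta>0$. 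So no amount of bookkeeping will ``collapse the geometric factor into the universal constant $84$'': the proposition as transcribed has silently dropped the standing normalization in \cite{weed2019sharp} that the ambient space has diameter at most $1$ (equivalently, for the mixture case, that the component means lie in a bounded set). With that hypothesis restored, your sketch is a correct proof modulo constant-tracking; without it, the statement is false, and the defect lies in the quoted statement rather than in your argument.
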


Application of the triangle inequality to \cref{prop:weedclustered} immediately yields the following:
\begin{proposition}[$\Var_k$ for clustered distributions]
Suppose $d> 4$. For the $(m,\sigma^2)$-Gaussian mixture case with $k \leq m (32 \sigma^2 \log \nicefrac{1}{\sigma})^{-2}$:
\[\Var_k(\mu) \leq {\frac{168m^{1/2}}{k^{1/2 - 2/d}}}.\]
For the $(m,\Delta)$-clusterable case with $k \leq m(2\Delta)^{-4}$: 
\[
\Var_k(\mu)
\leq {\frac{168m^{1/2}}{k^{1/2 - 2/d}}}.\]
\end{proposition}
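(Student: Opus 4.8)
The plan is to reduce the statement to \cref{prop:weedclustered} via the triangle inequality for the Wasserstein metric, exactly as in the derivation of \cref{prop:varklowdim}. Write $\hat\mu_k := \frac1k\sum_{i=1}^k\delta_{X_i}$ and $\hat\mu_k' := \frac1k\sum_{i=1}^k\delta_{Y_i}$ for the two independent $k$-point empirical measures appearing in \eqref{eq:vark}. Since $\W_2$ is a genuine metric on $\Prob(\R^d)$, the triangle inequality gives $\W_2(\hat\mu_k,\hat\mu_k')\leq \W_2(\hat\mu_k,\mu)+\W_2(\mu,\hat\mu_k')$; squaring and using $(a+b)^2\leq 2a^2+2b^2$ yields
\[
\W_2^2(\hat\mu_k,\hat\mu_k')\leq 2\,\W_2^2(\hat\mu_k,\mu)+2\,\W_2^2(\mu,\hat\mu_k').
\]
Taking expectations and noting that $\hat\mu_k$ and $\hat\mu_k'$ are identically distributed, the right-hand side is $4\,\E[\W_2^2(\mu,\hat\mu_k)]$.

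Next I would invoke \cref{prop:weedclustered}. In the $(m,\sigma^2)$-Gaussian mixture case, with $\log\nicefrac{1}{\sigma}\geq\nicefrac{25}{8}$ and $k\leq m(32\sigma^2\log\nicefrac{1}{\sigma})^{-2}$, inequality \eqref{eq:weedbach} gives $\E[\W_2^2(\mu,\hat\mu_k)]\leq 84\sqrt{\nicefrac{m}{k}}$; the same bound holds for an $(m,\Delta)$-clusterable $\mu$ whenever $k\leq m(2\Delta)^{-4}$. In either case, combining with the previous display,
\[
\E_{\substack{X_1,\ldots,X_k\sim\mu \\ Y_1,\ldots,Y_k\sim\mu}}\left[\W_2^2\left(\frac1k\sum_{i=1}^k\delta_{X_i},\frac1k\sum_{i=1}^k\delta_{Y_i}\right)\right]\leq 336\sqrt{\nicefrac{m}{k}}.
\]

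Finally, since $d>4>2$, the ambient scaling rate is $\rho(k,d)=k^{2/d}$, so by \cref{def:kvar}
\[
\Var_k(\mu)=\frac12\,k^{2/d}\cdot\E\left[\W_2^2(\hat\mu_k,\hat\mu_k')\right]\leq \frac12\cdot k^{2/d}\cdot 336\sqrt{\nicefrac{m}{k}}=168\,m^{1/2}\,k^{2/d-1/2}=\frac{168\,m^{1/2}}{k^{1/2-2/d}},
\]
which is the claimed bound in both cases. I do not expect a genuinely hard step here: the whole argument is the $(a+b)^2\leq 2a^2+2b^2$ trick composed with a black-box citation. The only things to be careful about are the bookkeeping of constants ($\frac12\times4\times84=168$) and carrying the sample-size restriction of \cref{prop:weedclustered} over verbatim, since \eqref{eq:weedbach} is only valid in that range of $k$.
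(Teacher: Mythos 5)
Your proposal is correct and is exactly the argument the paper intends: the paper's ``proof'' is the one-line remark that the proposition follows from applying the triangle inequality to \cref{prop:weedclustered}, and your expansion (triangle inequality, $(a+b)^2\leq 2a^2+2b^2$, identical distribution of the two empirical measures, then the $\rho(k,d)=k^{2/d}$ scaling, with $\tfrac12\cdot 4\cdot 84=168$) fills in precisely those steps with the constants matching.
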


Roughly, this proposition shows that as $d$ increases and $k$ satisfies the inequality, clustered distributions have increasingly small $\Var_k(\cdot)$, though the rate of increase slows rapidly once $d$ gets beyond $\sim\!10$.

\section{Variance of empirical $k$-variance}\label{sec:variance}
We conclude our mathematical discussion by considering the problem of how to \emph{compute} $k$-variance in practice.  There exists an extremely simple \emph{empirical estimator} directly motivated by the expectation in \eqref{eq:vark}:  simply draw $2k$ samples, solve the linear program \eqref{eq:w2finite}, and use the resulting value. Note a simple implementation of this algorithm takes roughly $O(k^2d+k^3)$ time, accounting for the time taken to compute the pairwise cost matrix as well as solving the transport linear program (our implementation uses \cite{duff2001algorithms}). Here we bound the variance of this estimator, roughly showing that fewer trials need to be averaged to compute $k$ in large dimension.

In detail, we consider the empirical estimator built from $n$ trials:
\[
\widehat{\mathrm{Var}}_k(\mu) := \frac{\rho(k,d)}{2n}\sum_{j=1}^n \W_2^2(\hat{\mu}^j_k,\hat{\mu}'^j_k),
\]
where $\hat{\mu}^j_k$, $\hat{\mu}'^j_k$ are independent empirical measures formed from $k$ i.i.d.\ samples as in \eqref{eq:vark}.

The following theorem helps characterize the variance of our estimator above:
\begin{theorem}[Empirical variance]\label{thm:varVanilla}
 Suppose $\mu\in\Prob(\R^d)$ has support in a set of radius $R$. For each $j \in\{1,\ldots, n\}$, take $\hat{\mu}^j_k,\hat{\mu}'^j_k$ to be independent empirical measures each constructed from $k$ i.i.d.\ samples from $\mu$ ($X^{k,j}:=(X^j_1,\ldots,X^j_k)$ for $\hat{\mu}_k$ and $Y^{k,j}:=(Y^j_1,\ldots,Y^j_k)$ for $\hat{\mu}'_k$). Then,
 \begin{equation}
\mathbb{P}\left(
\left|
\widehat{\mathrm{Var}}_k(\mu)
-\mathrm{Var}_k(\mu)\right| \geq  \rho(k,d)R^2 \sqrt{\frac{\log (kn)}{kn}}\right)\leq \frac{2}{k^{2}n^2}.
\end{equation}
 \end{theorem}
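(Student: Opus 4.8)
The plan is to prove this as a concentration statement for $\widehat{\mathrm{Var}}_k(\mu)$ around its mean, via a bounded‑differences (McDiarmid) argument applied to the underlying i.i.d.\ samples. First I would record that the estimator is unbiased: by \cref{def:kvar}, $\E[\widehat{\mathrm{Var}}_k(\mu)] = \mathrm{Var}_k(\mu)$, since each of the $n$ summands $\frac{\rho(k,d)}{2}\W_2^2(\hat\mu_k^j,\hat\mu_k'^j)$ has expectation exactly $\mathrm{Var}_k(\mu)$. Hence it suffices to bound $|\widehat{\mathrm{Var}}_k(\mu)-\E\widehat{\mathrm{Var}}_k(\mu)|$, and the natural device is McDiarmid's bounded differences inequality applied to $\widehat{\mathrm{Var}}_k(\mu)$ regarded as a function of the $2kn$ independent samples $\{X_i^j\}\cup\{Y_i^j\}$.

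The heart of the argument is a stability estimate. Changing a single sample, say $X_i^j$, only affects the $j$-th summand, and I claim it changes $\W_2^2(\hat\mu_k^j,\hat\mu_k'^j)$ by at most $\frac1k\,\mathrm{diam}(\supp\mu)^2 \le \frac{4R^2}{k}$. To see this, take an optimal coupling for the original pair of $k$-point empirical measures and re‑route only the $1/k$ unit of mass located at the moved atom, leaving the rest of the plan unchanged; this is feasible for the perturbed pair and increases the transport cost by at most $\frac1k$ times the largest squared distance between points of $\supp\mu$, which is $\le 4R^2$, and the reverse inequality follows by symmetry. Therefore $\widehat{\mathrm{Var}}_k(\mu)$ has bounded differences $c \le \frac{\rho(k,d)}{2n}\cdot\frac{4R^2}{k} = \frac{2\rho(k,d)R^2}{kn}$ in each of its $2kn$ coordinates.

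McDiarmid then yields $\mathbb{P}\big(|\widehat{\mathrm{Var}}_k(\mu)-\mathrm{Var}_k(\mu)|\ge t\big) \le 2\exp\!\big(-2t^2/(2kn\,c^2)\big) = 2\exp\!\big(-t^2 kn/(4\rho(k,d)^2 R^4)\big)$, up to the constant coming from the radius‑versus‑diameter bound. Choosing $t$ a suitable constant multiple of $\rho(k,d)R^2\sqrt{\log(kn)/(kn)}$ makes the exponent $-\Theta(\log(kn))$; with the constant tuned so that it equals $-2\log(kn)$, the right‑hand side becomes $2/(k^2n^2)$, as claimed. I expect the main obstacle to be the stability lemma — justifying the re‑routing/coupling argument cleanly and extracting the sharp $O(R^2/k)$ constant — after which everything is routine bookkeeping, though one must be careful with the diameter‑versus‑radius factor and with counting the $2kn$ perturbed coordinates in order to land exactly on the stated threshold and probability.
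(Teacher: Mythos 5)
Your strategy coincides with the paper's: both proofs apply McDiarmid's bounded-differences inequality to the independent samples underlying $\widehat{\Var}_k(\mu)$, with the key ingredient being an $O(R^2/k)$ stability estimate for $\W_2^2$ between $k$-point empirical measures when a single atom is replaced, and both use the unbiasedness $\E[\widehat{\Var}_k(\mu)]=\Var_k(\mu)$ to pass from concentration about the mean to the stated bound. The one genuinely different ingredient is your stability lemma: you prove it by a primal re-routing argument (take the optimal plan and move only the $1/k$ mass sitting at the perturbed atom), whereas the paper goes through Kantorovich--Rubinstein duality and boundedness of the dual potentials via \cite[Remark 1.13]{villani2003topics}. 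Your coupling argument is correct and arguably more elementary and self-contained. The place where your write-up falls short of the exact statement is the bookkeeping, as you yourself suspected: treating the $2kn$ individual samples as separate coordinates with oscillation $c=\frac{2\rho(k,d)R^2}{kn}$ gives $\sum_i c_i^2=\frac{8\rho(k,d)^2R^4}{kn}$, so McDiarmid yields $2\exp\left(-\frac{t^2kn}{4\rho(k,d)^2R^4}\right)$, which at the stated threshold $t=\rho(k,d)R^2\sqrt{\log(kn)/(kn)}$ is only $2(kn)^{-1/4}$; to reach $2(kn)^{-2}$ you must inflate $t$ by $2\sqrt2$. The paper closes this gap by grouping each independent pair $(x_i^j,y_i^j)$ into a \emph{single} McDiarmid coordinate (so there are only $kn$ coordinates) and by bounding the per-pair oscillation by $\frac{2R^2}{k}$ rather than your per-sample $\frac{4R^2}{k}$, which gives $\sum_i c_i^2=\frac{\rho(k,d)^2R^4}{kn}$ and hence exactly $2/(k^2n^2)$ at the advertised threshold. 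So as written you prove the theorem only up to a universal constant in the deviation threshold; adopting the pairwise grouping (the two samples in a pair are independent, so a pair is a legitimate single coordinate) and sharpening the single-coordinate oscillation bound would recover the exact statement.
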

\begin{proof}
We use McDiarmid's inequality: 
\begin{lemma}[McDiarmid's Inequality, \cite{mcdiarmid1989method}]
Let $X^m:=(X_1,\ldots,X_m)$ be an $m$-tuple of $\mathcal{X}$-valued independent random variables. Suppose $g:\mathcal{X}^m\to\mathbb{R}$ is a map that for any $i=1,\ldots,m$ and $x_1,\ldots,x_m,x_i'\in\mathcal{X}$ satisfies
\vspace{2mm}
\begin{equation}
    \big|g(x_1,\ldots,x_m)-g(x_1,\ldots,x_{i-1},x'_i,x_{i+1},\ldots,x_m)\big|\leq c_i,\label{EQ:bdd_diff}
\end{equation}
for some non-negative $\{c_i\}_{i=1}^m$. Then for any $t>0$:
\begin{subequations}
\begin{align}
    \mathbb{P}\Big(g(X_1,\ldots,X_m)-\E g(X_1,\ldots,X_m)\geq t\Big)&\leq e^{-\frac{2t^2}{\sum_{i=1}^m c_i^2}}\label{EQ:McDiarmid1}\\
    \mathbb{P}\Big(\big|g(X_1,\ldots,X_m)-\E g(X_1\ldots,X_m)\big|\geq t\Big)&\leq 2e^{-\frac{2t^2}{\sum_{i=1}^m c_i^2}}.\label{EQ:McDiarmid2}
\end{align}
\end{subequations}
\end{lemma}

Consider $\frac{1}{n} \sum_{j = 1}^n \W_2^2(\hat{\mu}^j_k,\hat{\mu}'^j_k)$ as a function of the $nk$ independent samples from which it is computed, each sample being a pair $(x_i^j, y_i^j)$. Using Kantorovich--Rubinstein duality, we have the general formula:
\[
\W_2^2(P,Q) = \sup_{(f,g) \in \Phi}  \E_P[f] + \E_Q[g]
\]
where $\Phi = \{ (f,g)\in L^1(P)\times L^1(Q) :   f(x)+g(y)\leq \|x-y\|^2\}$.  In our case, separately for each $j$, we can write
\begin{align*}
    \W_2^2(\hat{\mu}^j_k,\hat{\mu}'^j_k) = \W_2^2\left(\frac{1}{k} \sum_{\ell = 1}^k \delta_{x^j_\ell}, \frac{1}{k} \sum_{\ell = 1}^k \delta_{y^j_\ell}\right)
    = \sup_{(f,g) \in \Phi} \frac{1}{k} \sum_{\ell = 1}^k (f(x^j_\ell)+ g(y^j_\ell)).
\end{align*}
Recall that the $(x^j_\ell, y^j_\ell)$ are independent across $\ell$ and $j$. Consider replacing one of the elements $(x^j_i, y^j_i)$ with some $(x'^j_i, y'^j_i)$, forming $\bar{\mu}^j_k$ and $\bar{\mu}'^j_k$. Since the $(x^j_\ell, y^j_\ell)$ are identically distributed, by symmetry we can set $i = 1$. We thus bound
\begin{align*}
    \W_2^2(\hat{\mu}^j_k,\hat{\mu}'^j_k) - \W_2^2(\bar{\mu}^j_k,\bar{\mu}'^j_k)
    &= \sup_{(f,g) \in \Phi} \frac{1}{k}\left((f(x^j_1)+ g(y^j_1)) + \sum_{\ell = 2}^k (f(x^j_\ell)+ g(y^j_\ell))\right)\\
    &\qquad- \sup_{(f,g) \in \Phi} \frac{1}{k}\left((f(x'^j_1)+ g(y'^j_1)) + \sum_{\ell = 2}^k (f(x^j_\ell)+ g(y^j_\ell))\right)
    \leq  \frac{2R^2}{k},
\end{align*}
where we have assumed the space is bounded with radius $R$ and used the definition of $\Phi$ and \cite[Remark 1.13]{villani2003topics}. 

Hence by symmetry and scaling by $\frac{\rho(k,d)}{2n}$ as in the expression in the theorem we have
\[
\left|\frac{\rho(k,d)}{2n}\W_2^2(\hat{\mu}^j_k,\hat{\mu}'^j_k) - \frac{\rho(k,d)}{2n}\W_2^2(\bar{\mu}^j_k,\bar{\mu}'^j_k)\right| \leq \frac{\rho(k,d)R^2}{kn},
\]
satisfying the condition \eqref{EQ:bdd_diff} for McDiarmid's inequality for each of the $nk$ random variables $(x_i^j, y_i^j)$. Therefore, for any $t > 0$, by  \eqref{EQ:McDiarmid2} we have 
\[
\mathbb{P}\left(\left|\frac{\rho(k,d)}{2n}\sum_{j=1}^n (\W_2^2(\hat{\mu}^j_k,\hat{\mu}'^j_k)-\E \W_2^2(\hat{\mu}_k,\hat{\mu}'_k))\right|\geq t\right)\leq 2e^{-\frac{2kn t^2}{R^4\rho(k,d)^2}}.
\]
Setting $t = \rho(k,d)R^2\sqrt{\frac{\log (kn)}{kn}}$ then yields 
\[
\mathbb{P}\left(\left|\frac{\rho(k,d)}{2n}\sum_{j=1}^n\left( \W_2^2(\hat{\mu}^j_k,\hat{\mu}'^j_k)-\E \W_2^2(\hat{\mu}^j_k,\hat{\mu}'^j_k)\right)\right|\geq  \rho(k,d)R^2 \sqrt{\frac{\log (kn)}{kn}}\right)\leq \frac{2}{k^{2}n^2}.
\]
Recalling the definition of $\mathrm{Var}_k(\mu)$, the theorem results.
\end{proof}
 \begin{remark}[Interpretation of \cref{thm:varVanilla}]
In words, 
as dimension $d$ and size $k$ increase, we need a smaller number $n$ of independent trials $n$ of $k$-samples to estimate $k$-variance accurately.  Eventually, even choosing $n=1$ suffices. 
\end{remark}

\begin{remark}[Alternative forms for \cref{thm:varVanilla}]
\cref{thm:varVanilla} is written in terms of the number $n$ of sets of $k$ replicates. We can rewrite it in terms of $k$ and $m = kn$, i.e., when a total of $m$ samples are available and one is choosing a $k$ to partition them. We have for $d > 2$ 
\begin{equation*}
\mathbb{P}\left(\left|\widehat{\mathrm{Var}}_k(\mu)-\mathrm{Var}_k(\mu)\right| \geq  \rho(k,d) R^2 \sqrt{\frac{\log m}{m}}\right)\leq \frac{2}{m^2}.
\end{equation*}
This in a sense reverses the tradeoff, with finer divisions of the $m$ available samples (smaller $k$) reducing the overall variance (only slightly for large $d$, however).
\end{remark}

\section{Experiments}\label{sec:experiments}

In this section, we provide some simple experiments demonstrating the behavior of $\Var_k(\cdot)$ and suggesting how it might be used to understand properties of distributions and datasets that are not well-captured by variance alone. 

\subsection{Gaussian Mixtures}\label{sec:gmm}

\begin{figure}
    \centering
    \begin{tabular}{cc}
    \includegraphics[width=.25\linewidth]{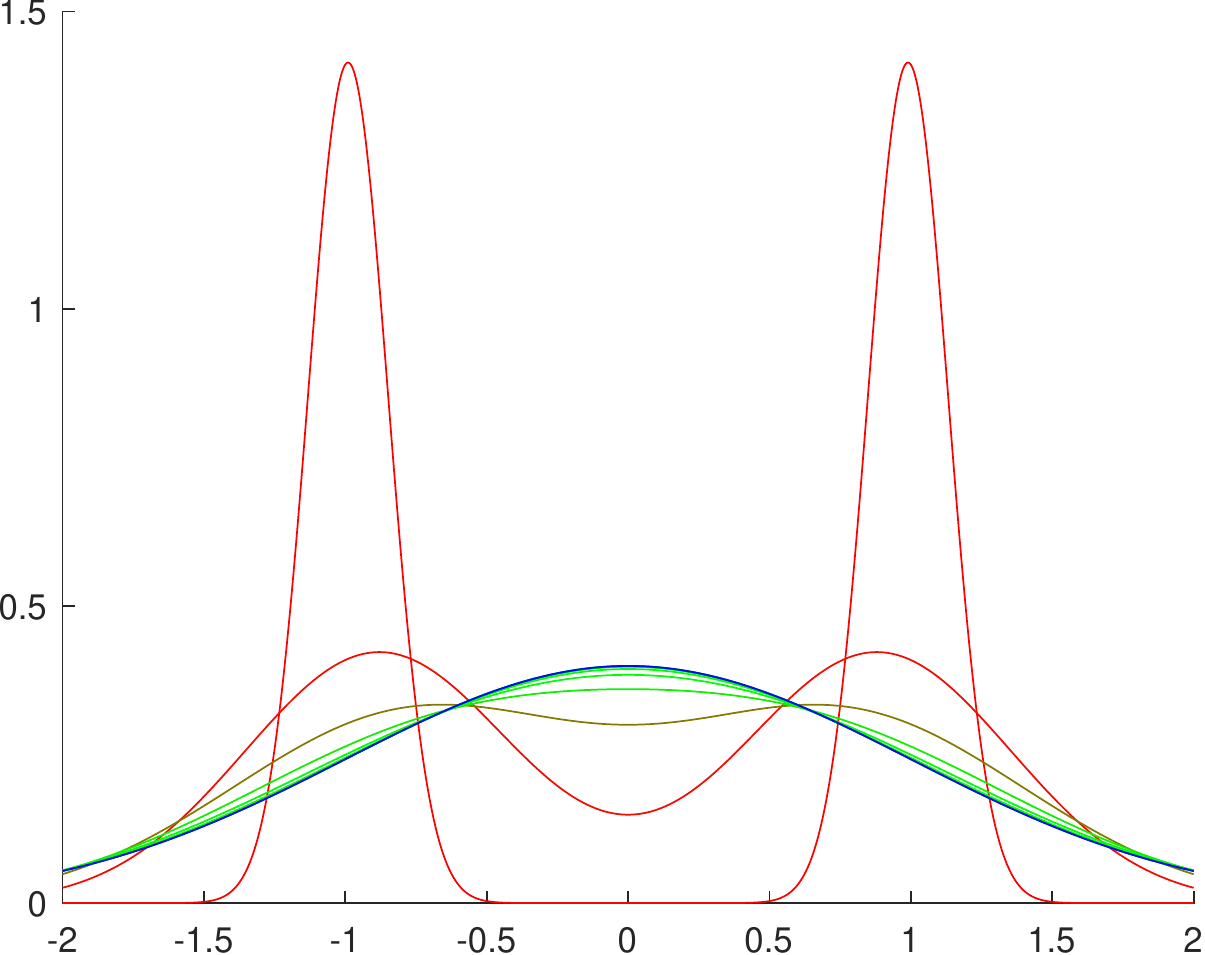} &
    \includegraphics[width=.25\linewidth]{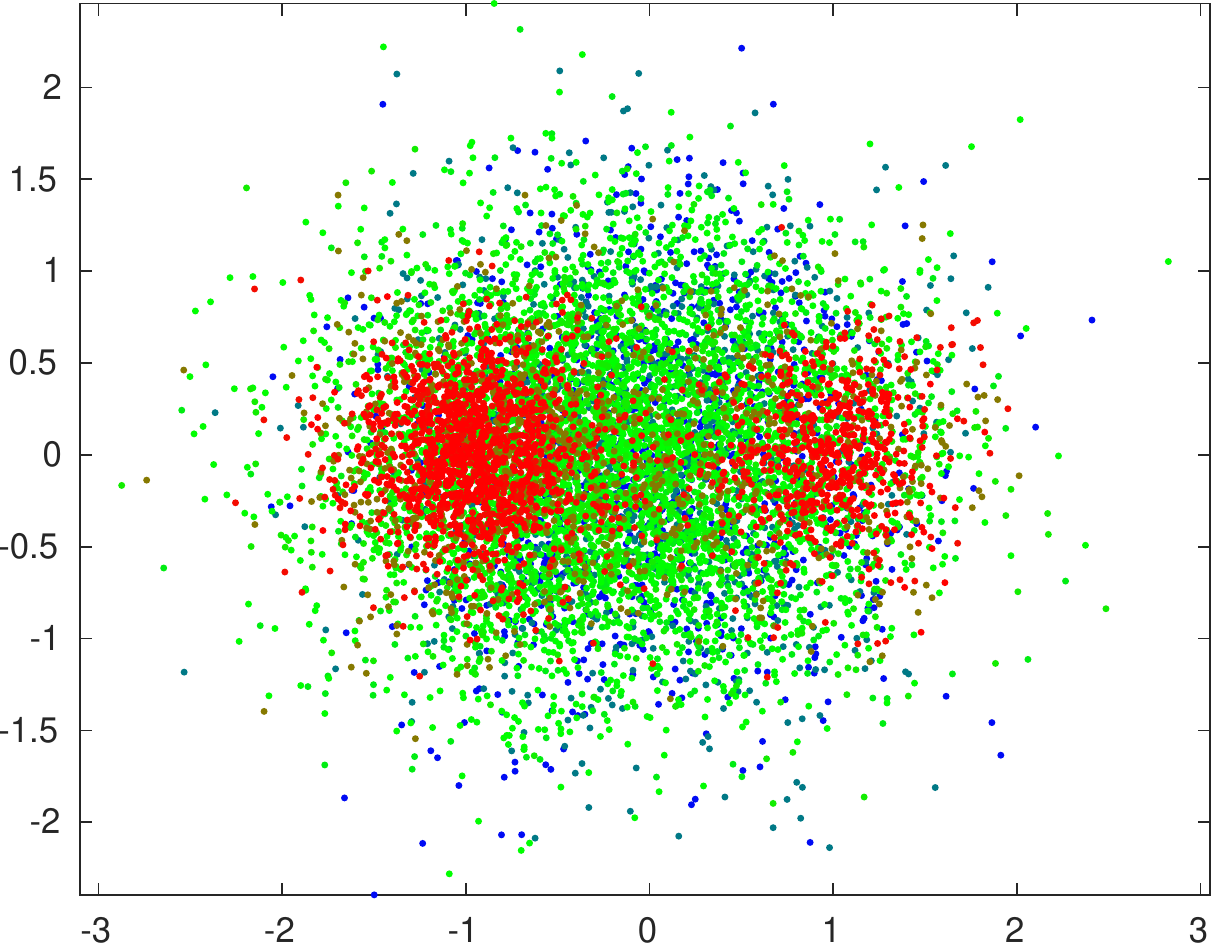}\\
    (a) $d=1$ & (b) $d=2$\\ \emph{\scriptsize Distribution functions} & \emph{\scriptsize Samples}
    \end{tabular}
    \caption{Example distributions for the experiments in \cref{sec:gmm}.  For $d=1$ we show the density functions corresponding to the different colors, and for $d=2$ we show samples drawn from the various measures.}
    \label{fig:samplegmm}
\end{figure}

\begin{figure}
    \centering
    \begin{tabular}{@{}c@{}c@{}c@{}c@{}}
    \includegraphics[width=.25\linewidth]{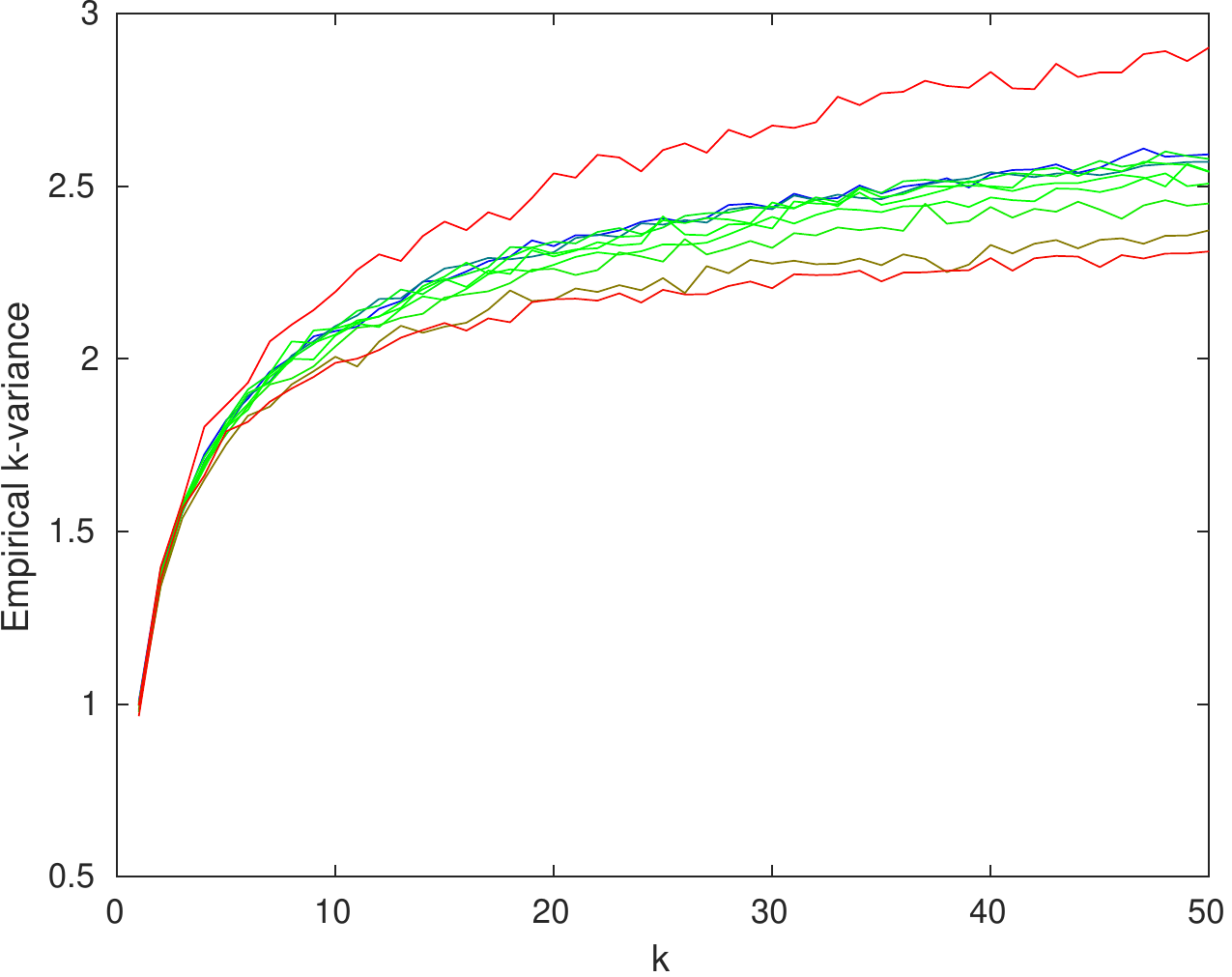}&
    \includegraphics[width=.25\linewidth]{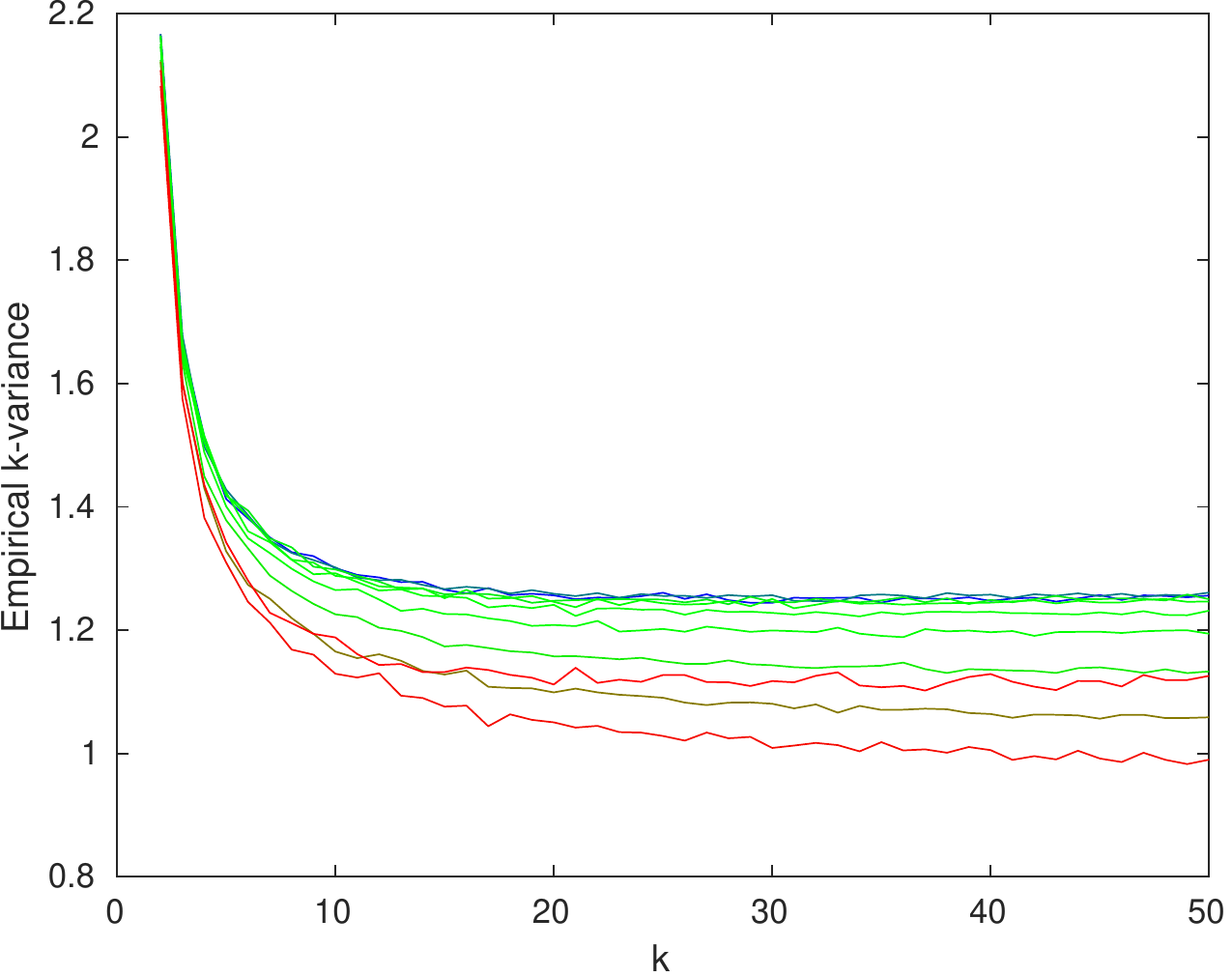}&
    \includegraphics[width=.25\linewidth]{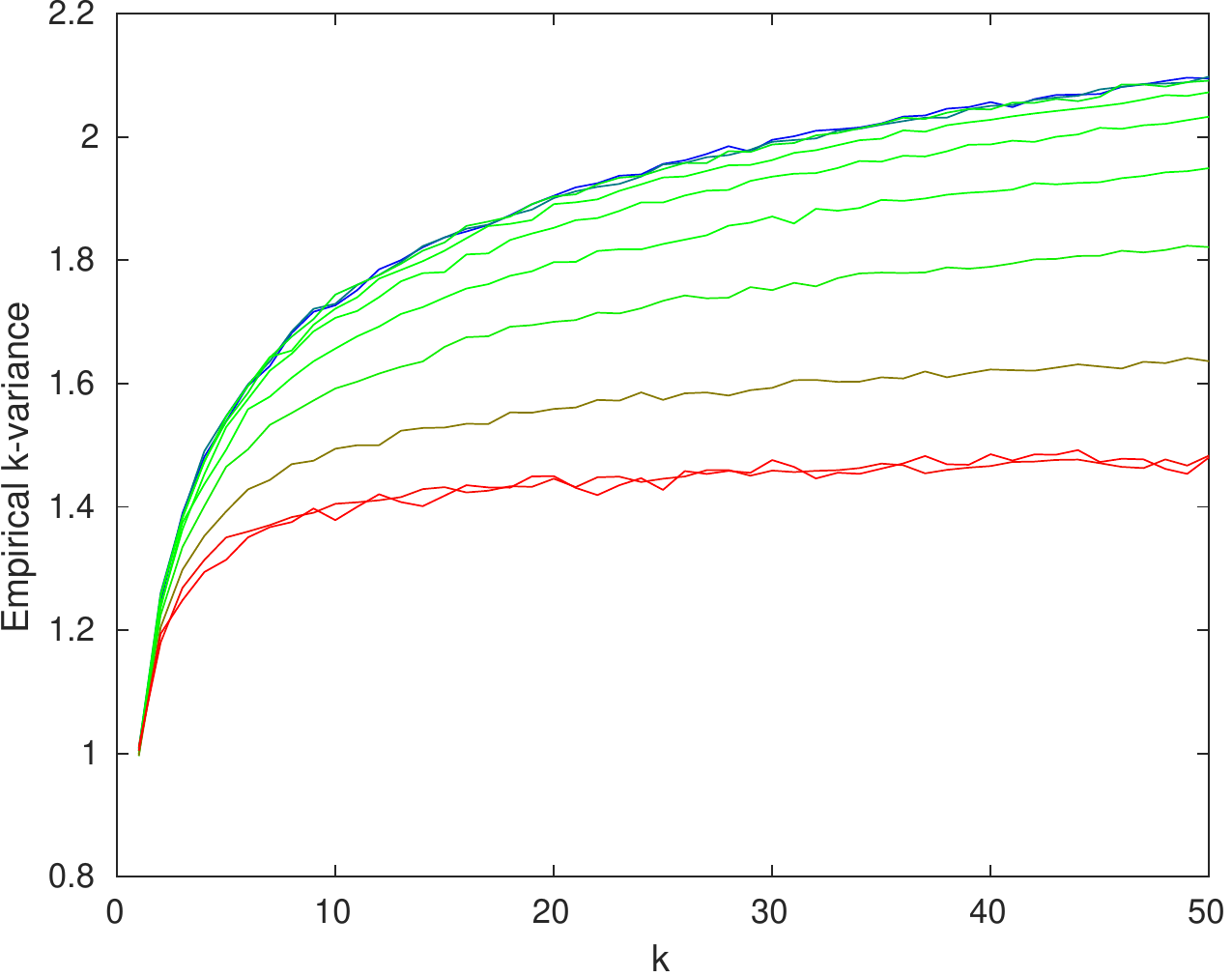}&
    \includegraphics[width=.25\linewidth]{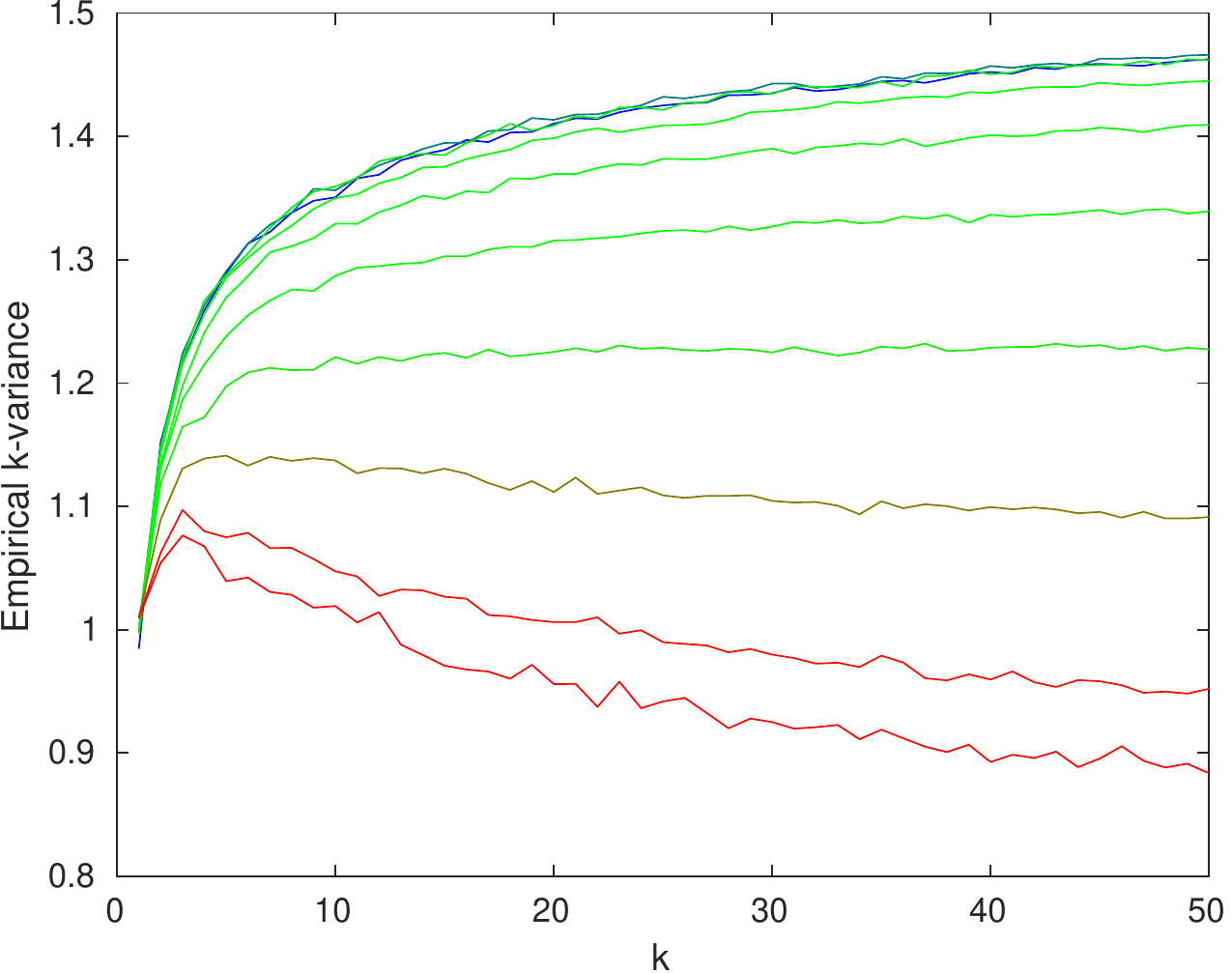}\vspace{-.05in}\\
    $d=1$ & $d=2$ & $d=3$ & $d=4$ \\
    \includegraphics[width=.25\linewidth]{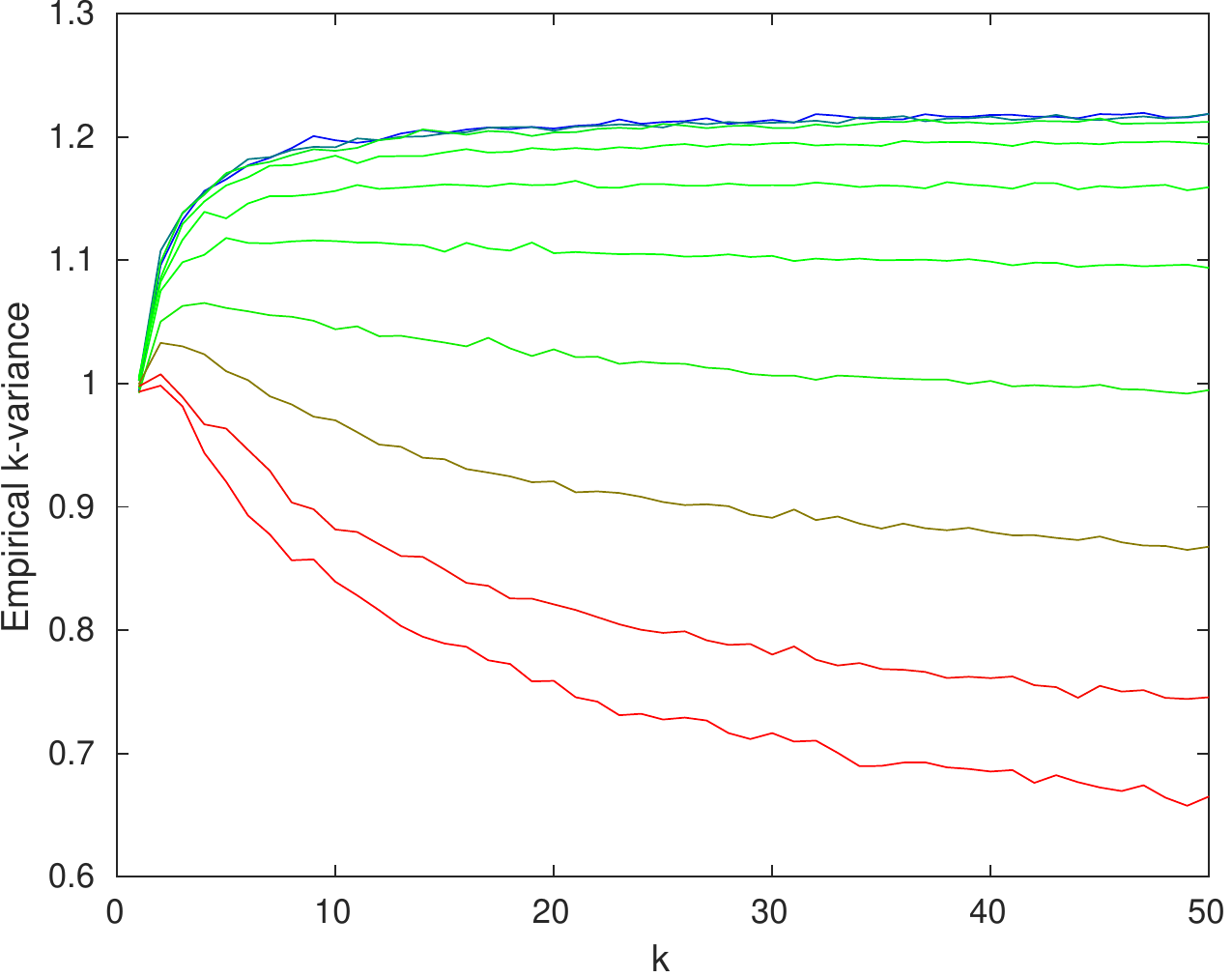}&
    \includegraphics[width=.25\linewidth]{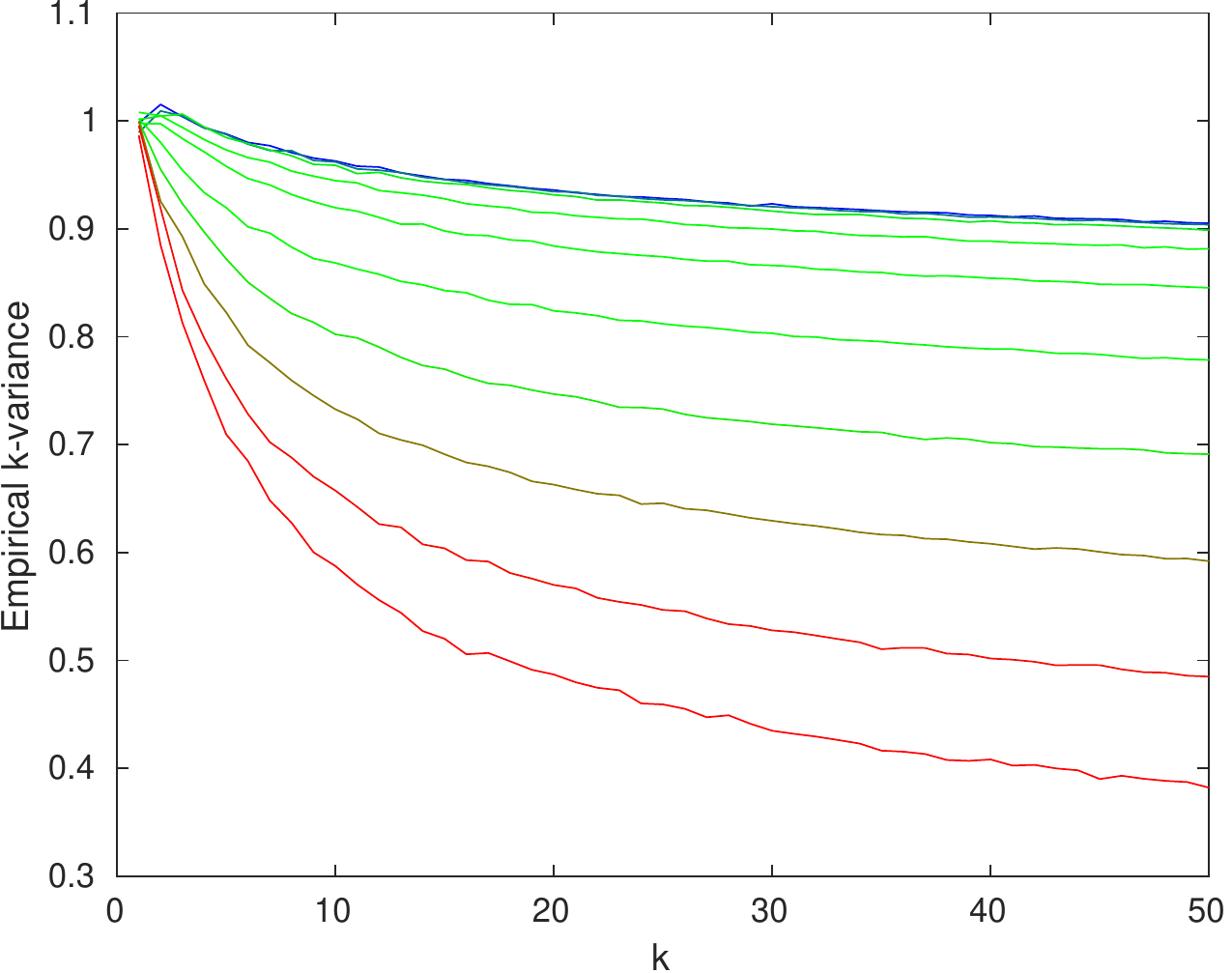}&
    \includegraphics[width=.25\linewidth]{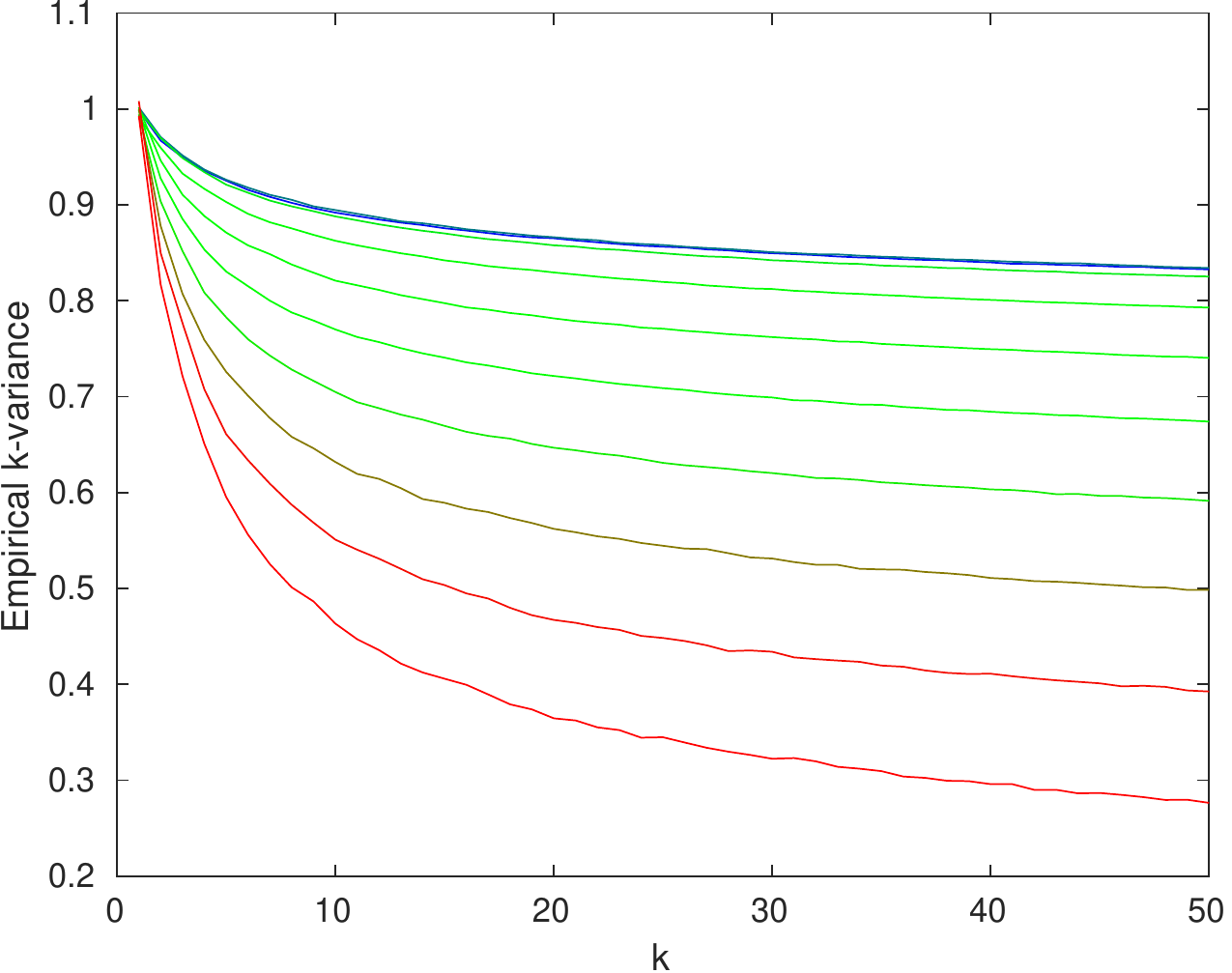}&
    \includegraphics[width=.25\linewidth]{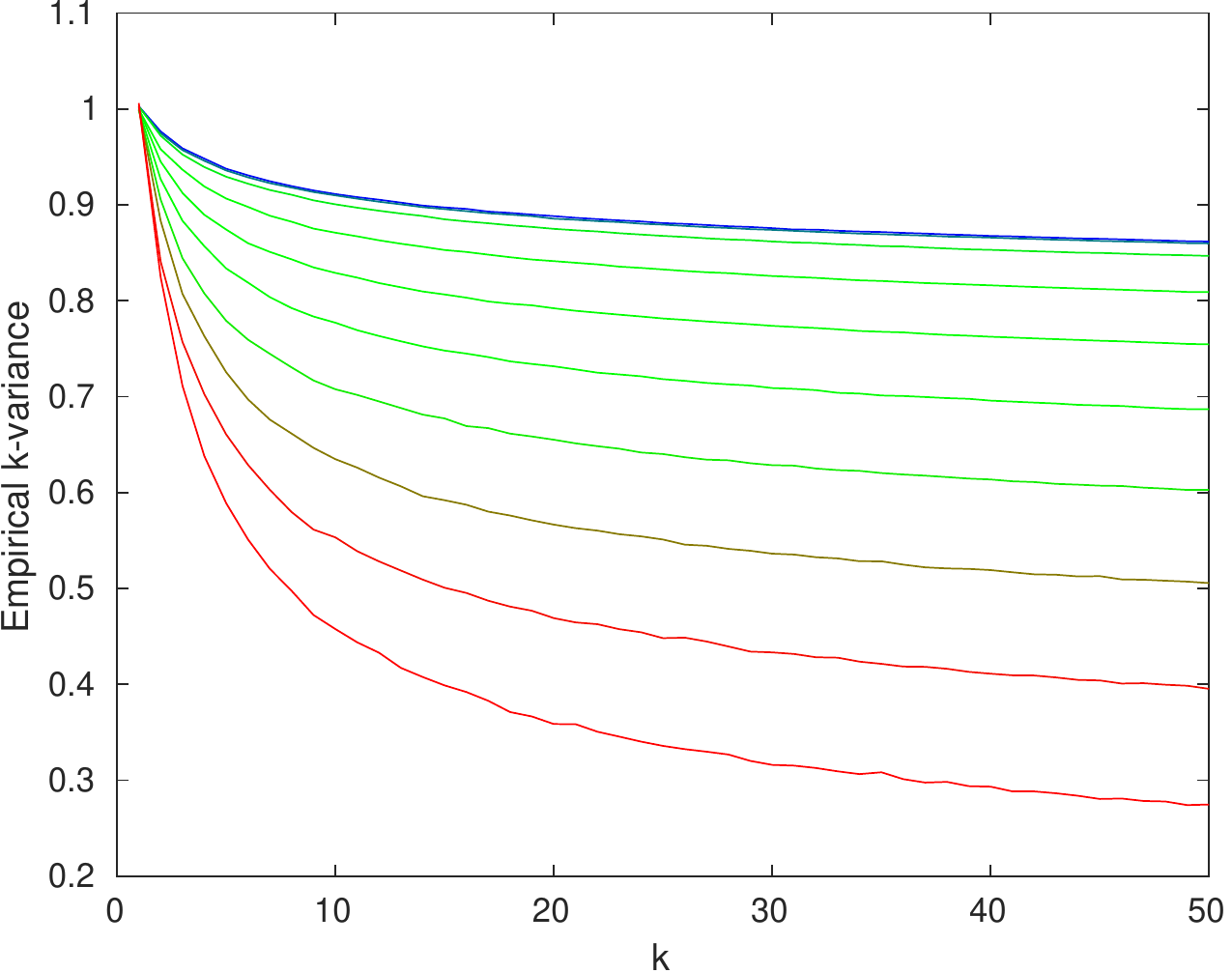}\vspace{-.05in}\\
    $d=5$ & $d=10$ & $d=50$ & $d=100$
    \end{tabular}
    \caption{$k$-variance experiments with Gaussian mixture models (see \cref{sec:gmm}) in increasing dimension.  As predicted, $k$-variance follows similar patterns in $d\geq3$ and is lower for clustered distributions, but for $d\in\{1,2\}$ the behavior is different. Colors range from bimodal mixtures of low-variance Gaussians (red) to unimodal Gaussian measures (blue); see \cref{fig:samplegmm} for examples in $d\in\{1,2\}.$}
    \label{fig:gmm}
\end{figure}

We begin with a synthetic experiment illustrating the behavior of $k$-variance in different dimensions and in the presence of multimodality.  In our experiments, we consider mixtures $\mathcal G_x:=0.5\mathcal N(-x\cdot e_1,\sigma I_{d\times d})+0.5\mathcal N(x\cdot e_1,\sigma I_{d\times d})$ of two isotropic Gaussians, where $e_1\in\R^d$ is the first standard basis vector in $\R^d$.  We choose $\sigma(x)$ so that $\Var_1(\mathcal G_x)=1$; note $\sigma(x)$ decreases as $|x|$ increases, leading to bimodal/approximately clustered distributions. See \cref{fig:samplegmm} for examples in dimensions 1 and 2.

\cref{fig:gmm} shows $k$-variance of $\mathcal G_x$ as a function of $k$ (horizontal axis) and $x$ (color) in different ambient dimensions $d$.  We use the empirical estimator of $k$-variance averaged over 10,000 trials for each point in the plot.  We can make a number of observations based on this experiment:
\begin{itemize}[leftmargin=*]
    \item For $d\geq3$, the $k$-variance is smaller for clustered distributions (red) than unimodal Gaussians (blue) with identical (1-)variance.
    \item The $d\in\{1,2\}$ cases exhibit unique, nonmonotonic behavior. For instance, when $d=1$, $k$-variance is highest for the sharply bimodal distributions (red), then decreases for wide-and-flat distributions (dark red/green), and then increases again for Gaussians (blue).
    \item For larger dimension $d$, the curves look smoother.  This is a byproduct of the results in \cref{sec:variance}, which predict that the empirical estimator of $\Var_k(\cdot)$ has lower variance in high dimension given a fixed number of samples.
\end{itemize}

\subsection{Low-dimensional measures} 

\begin{figure}
    \centering
    \includegraphics[width=.4\linewidth]{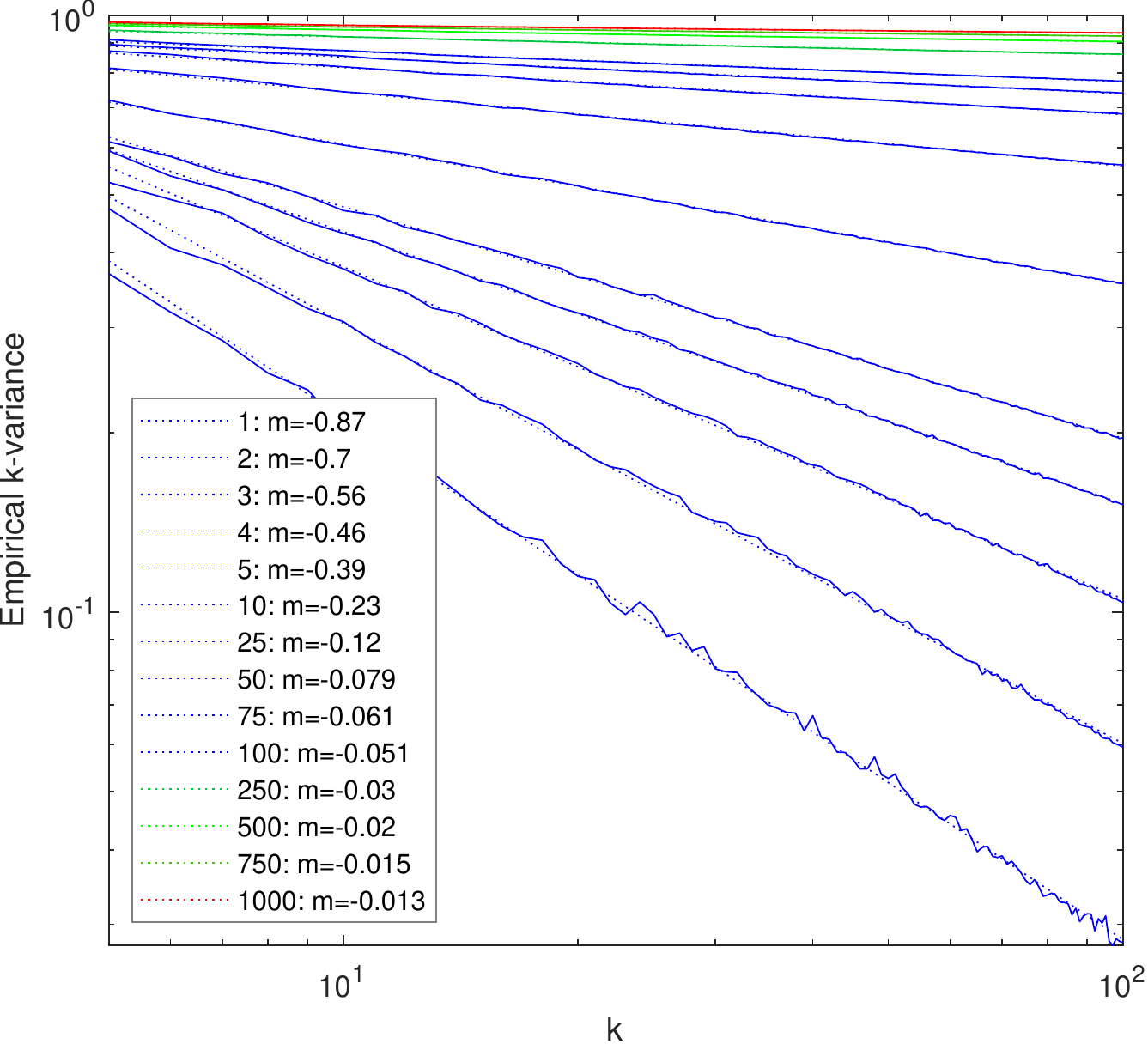}\vspace{-.15in}
    \caption{$k$-variance of measures supported on low-dimensional hyperplanes in $\R^{1000}$.  Each curve corresponds to a different intrinsic dimensionality $d'$ marked in the legend; $m$ is the slope of the best-fit curve in the log-log plot. Note the correlation between $m$ and the intrinsic dimensionality of the measure.}
    \label{fig:cross}
\end{figure}

Now, we consider the case explored in \cref{sec:lowdimensional}, in which our probability measure is embedded in a low-dimensional slice of the ambient space $\R^d$.  When $d$ is sufficiently large, \cref{prop:varklowdim} predicts that the $k$-variance for such a measure will decay to zero at a rate determined by the intrinsic dimensionality of the measure.

As an initial experiment, we consider Gaussian measures in dimension $d=1000$ supported on a $d'$-dimensional hyperplane, where $d'$ varies from 1 to $d$.  Here, we create the $d'$-dimensional measure by creating a Gaussian with covariance $$\Sigma_{d'} = \mathrm{diag}(\underbrace{\nicefrac1{d'},\ldots,\nicefrac1{d'}}_{d'\textrm{ slots}},\underbrace{0,\ldots,0}_{d-d'\textrm{ slots}}).$$ Here, the $\nicefrac1{d'}$ entries ensure that the measure has variance 1.  \cref{fig:cross} plots the $k$-variance of the $d'$-dimensional measures on a logarithmic scale; we use the empirical estimator of $k$-variance averaged over 1,000 trials.  

As predicted by \cref{prop:varklowdim}, the slopes of the fit lines in \cref{fig:cross} cleanly correlated with intrinsic dimensionality.  As $d'$ increases, the lines also become smoother, again a byproduct of the variance bounds in \cref{sec:variance}.  This is a happy coincidence:  We are able to distinguish the slopes of the different lines for large $d'$---even though they are close in value---because we can estimate $\Var_k(\cdot)$ more accurately in this regime.

\begin{figure}
    \centering
    \includegraphics[width=.4\linewidth]{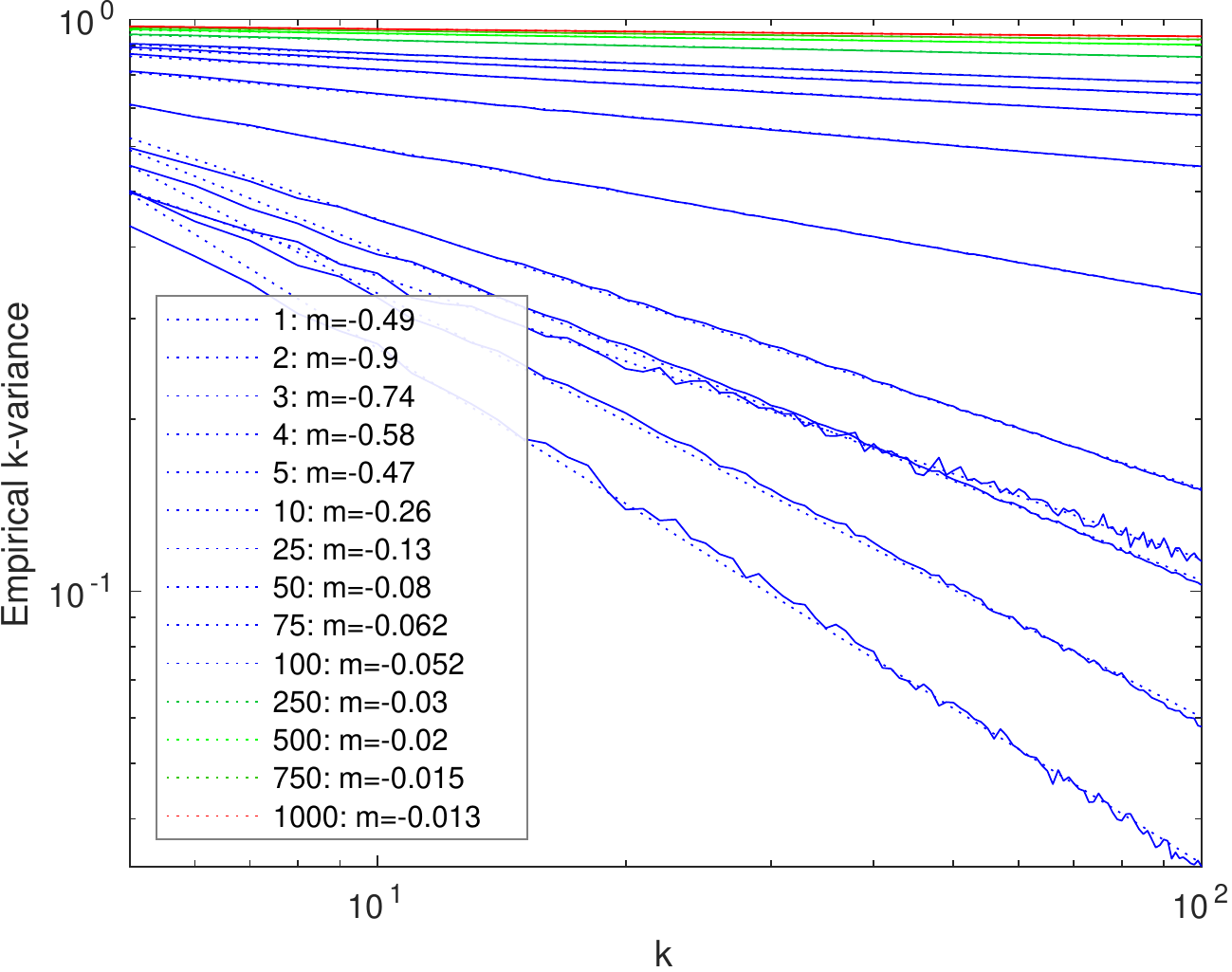}\vspace{-.15in}
    \caption{Similar experiment to \protect\cref{fig:cross}, now with points in the unit sphere $S^{d'-1}$ embedded in $\R^d$.}
    \label{fig:sphere}
\end{figure}

\cref{fig:sphere} shows a similar experiment to \cref{fig:cross}, but now the data lies on the sphere $S^{d'-1}$ embedded in $\R^d$; we sample uniformly from $S^{d'-1}$ by normalizing the samples from the previous experiment to unit length. Once again the trendlines strongly fit the power law we expect to see, but the slopes are now less negative compared to \cref{fig:cross} since the intrinsic dimensionality has decreased by $1$. In particular, note that $d' = 1$ corresponds to a zero-dimensional sphere $S^0$, i.e. two points on the real line. Since $S^0$ is thus a discrete dataset, this explains the approximately $-\nicefrac12$ slope in the log-log plot, corresponding to the $k^{-\nicefrac12}$ decay indicated in \cref{sec:clustered}.

\subsection{Digits}

\begin{figure}
    \centering
    \begin{tabular}{c@{\hspace{.5in}}c}
    \includegraphics[height=2in]{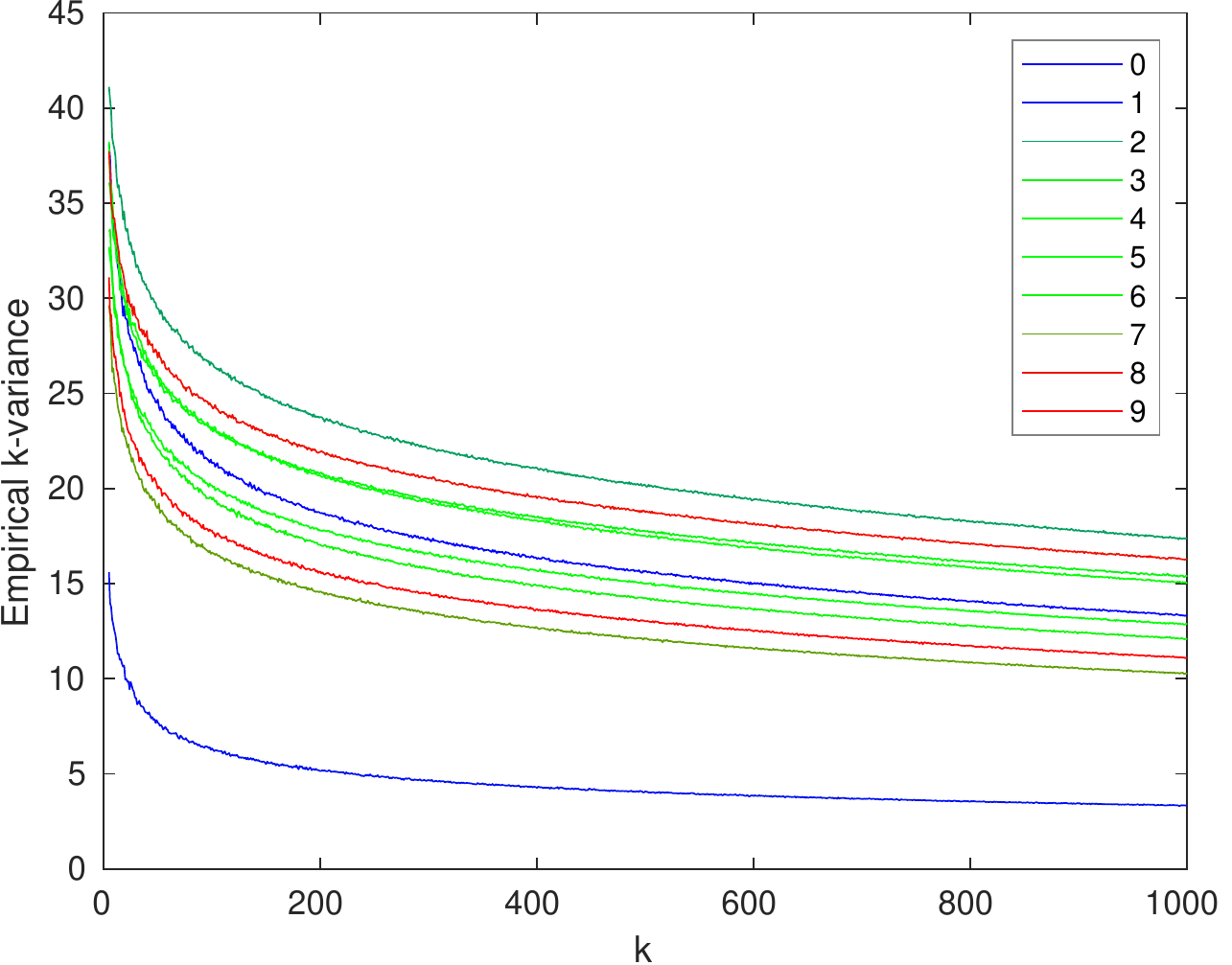} &
    \includegraphics[height=2in]{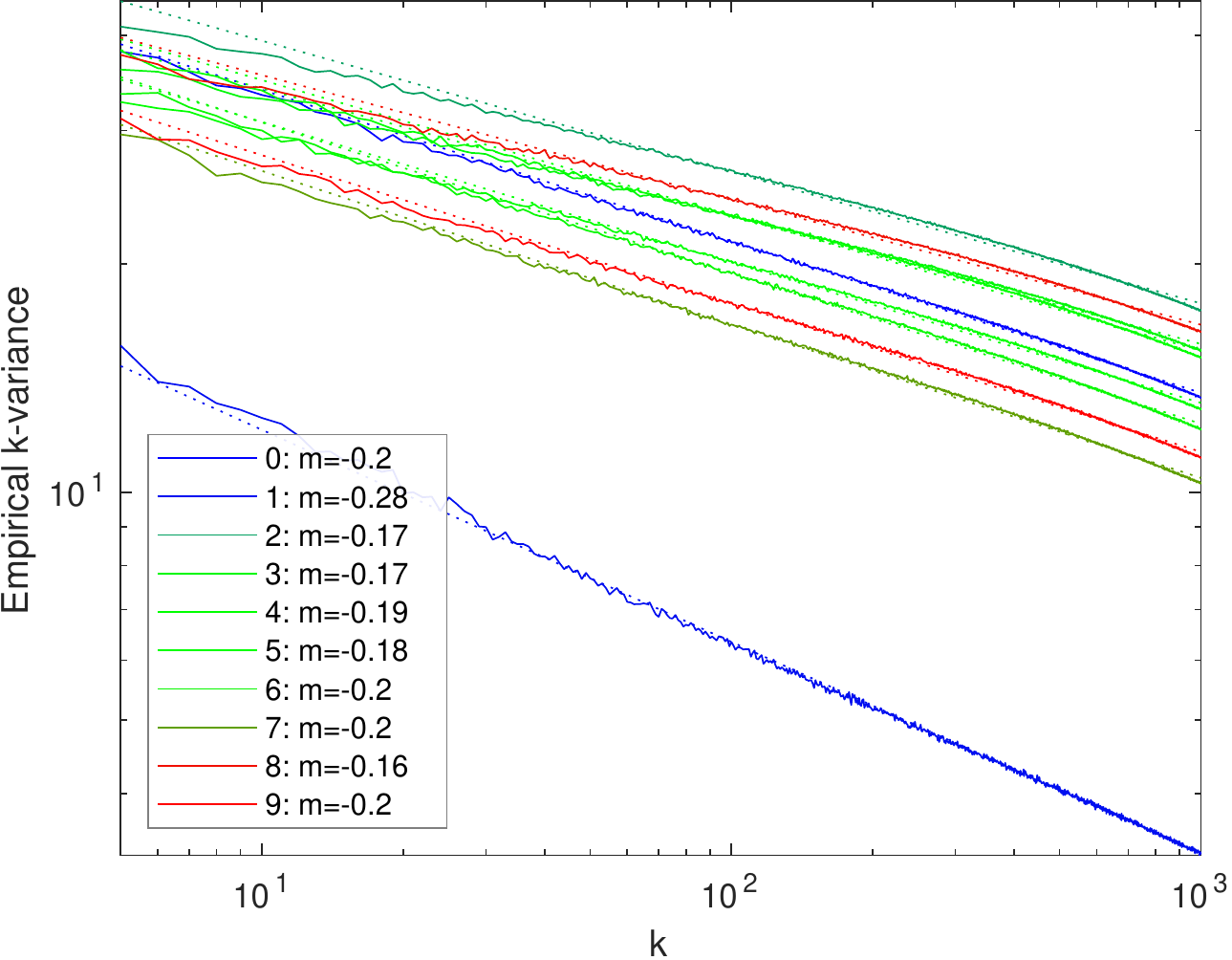} \\
    (a) Linear & (b) Log-log
    \end{tabular}
    \caption{$k$-variance of each MNIST digit from 0 to 9 in (a) linear and (b) log-log scale.  Each digit is considered as a $28^2$-dimensional vector ($d=784$); there are approximately 6,000 images per digit.}
    \label{fig:mnist}
\end{figure}

\cref{fig:mnist} plots approximate $k$-variance for the MNIST dataset of handwritten digits \cite{lecun1998mnist}, separated by digit. We use the stochastic estimator for $k$-variance from \cref{sec:variance}, where sampling from the distribution of handwritten digits is simulated by a bootstrapped strategy of sampling from the dataset with replacement.  Our distributions in this case are over $\R^{784}$, representing $28\times28$ images.  Given the high ambient dimension and the well-documented observation from past work that the MNIST digits roughly lie on low-dimensional submanifolds of $\R^{784}$, we expect $k$-variance to diminish to zero in this experiment. So, the relevant measurement is the rate at which this decay occurs.

Beyond varying amounts of variance between different digits ($k=1$), our experiments also reveal that the digit ``1'' has $k$-variance decaying in $k$ roughly $1.5\!\times$ faster than the other digits.  This provides a quantitative indicator of the observation that there are fewer variations in the way ``1'' is written relative to other digits.  

Less importantly, on the far right of the plots we see decay of the $k$-variance begin to accelerate.  This downward turn occurs roughly at the size of the dataset, because at this scale the bootstrapped estimator becomes less effective:  For extremely large $k$ the dataset looks like a collection of discrete points rather than a smooth distribution over $\R^{784}$.

\section{Conclusion and Future Work}

We can compute $k$-variance easily using a few lines of code, revealing potentially interesting structure hidden in a dataset or probability distribution. Hence, it is a straightforward addition to the data analysis toolkit.  While its properties in $\leq4$ dimensions are somewhat unexpected, beyond this point $k$-variance provides an intuitive means of measuring intra-cluster variance.  Somewhat surprisingly given the ``curse of dimensionality'' associated to optimal transport \cite{weed2019sharp}, we can use fewer data points to estimate $k$-variance of high-dimensional measures, as shown in \cref{sec:variance}.

Beyond its immediate relevance as an analytical tool, $k$-variance motivates a wide variety of challenging research problems moving forward:
\begin{itemize}[leftmargin=*]
    \item Are there nontrivial pairs of measures $\mu,\nu\in\Prob(\R^d)$ with $\Var_k(\mu)=\Var_k(\nu)$ for all $k\geq1$?  Under what conditions can a measure be reconstructed from its mean and sequence of $k$-variance values? 
    \item Beyond the empirical estimator proposed in this paper, are there more efficient or unbiased stochastic estimators for $k$-variance?
    \item Is it possible to generalize $k$-variance to a notion of ``$k$-covariance'' for $d>1$?
    \item Are there analogs of $k$-variance for higher-order moments of a measure?
    \item How do gradient flows of $k$-variance behave?
\end{itemize}

\section*{Acknowledgments}
The authors thank Philippe Rigollet for early feedback and in particular noticing the connection of our work to random bipartite matching and to \cite{bobkov2019one}; Lawrence Stewart for early discussion and experiments; David Wu for early discussions and help deriving combinatorial identities; Mikhail Yurochkin for discussion and feedback; and David Palmer for assistance running some experiments.

\bibliographystyle{siamplain}
\bibliography{references}

\end{document}